\documentclass[oneside,11pt]{amsart}
\usepackage[a4paper]{geometry}
\usepackage{amsthm}
\theoremstyle{plain} 

\usepackage{ragged2e}

\newtheorem{theorem}{Theorem}[section]
\newtheorem{lemma}{Lemma}[section] 
\newtheorem{corollary}{Corollary}[section] 
\newtheorem{remark}{Remark}[section]


\usepackage{amsmath}
\usepackage{amsfonts}
\usepackage{graphics}
\usepackage{amssymb}
\usepackage{verbatim}
\usepackage{amscd}
\usepackage{enumerate}
\usepackage{graphicx}

\usepackage{hyperref}

\usepackage{bbm}

\usepackage{enumitem}

\DeclareMathOperator\arctanh{arctanh}


\begin{document}

\title {A remark on Carleson measures of domains in $\mathbb{C}^{n}$}

\keywords{Carleson measure \;   Invariant metrics \; Berezin transform \; Hartogs triangles}

  
\author[Phung Trong Thuc]{Phung Trong Thuc}

\thanks{\fontsize{7.3}{7.3}\selectfont FACULTY OF APPLIED SCIENCE--HO CHI MINH CITY UNIVERSITY OF TECHNOLOGY, VIETNAM\newline \hspace*{0.32cm} \fontsize{9}{9}\selectfont E-mail address: ptrongthuc@hcmut.edu.vn}

 
\subjclass[2010]{Primary 47B35; Secondary 32F45.}

\begin{abstract} We  provide characterizations of Carleson measures on a certain class of bounded pseudoconvex domains. An example of a vanishing Carleson measure whose Berezin transform does not vanish on the boundary is given in the class of the Hartogs triangles 
$$\mathbb{H}_{k}:=\left\{ \left(z_{1},z_{2}\right)\in\mathbb{C}^{2}:\left|z_{1}\right|^{k}<\left|z_{2}\right|<1\right\},\;k\in\mathbb{Z}^{+}.$$

\end{abstract}

\maketitle

\section{Introduction}

Let $\Omega\,\subset\,\mathbb{C}^{n}$ be a bounded domain, and let $A^{2}\left(\Omega\right)$ be the space of holomorphic, square-integrable functions on $\Omega$ (with respect
to the Lebesgue measure $dV$ in $\mathbb{C}^{n}$). A non-negative finite  Borel measure $\mu$ on $\Omega$ is called \emph{a  Carleson measure} of $A^{2}\left(\Omega\right)$ if there exists a positive constant $C$ such that 
\begin{equation}\label{intro1}
\intop_{\Omega}\left|h\right|^{2}d\mu\leq C\intop_{\Omega}\left|h\right|^{2}dV,\;\forall h\in A^{2}\left(\Omega\right).
\end{equation}
Equivalently, the inclusion $A^{2}\left(\Omega\right)\hookrightarrow L^{2}\left(\Omega,\mu\right)$ 
 is bounded. A Carleson measure $\mu$ on $\Omega$ is called \emph{vanishing} if the inclusion $A^{2}\left(\Omega\right)\hookrightarrow L^{2}\left(\Omega,\mu\right)$  is compact.  
  
We are interested in finding a characterization of Carleson and of vanishing Carleson measures on a bounded pseudoconvex domain $\Omega$ in $\mathbb{C}^{n}$. Let $K$ denote the Bergman kernel of $\Omega$. Given a Borel measure
$\mu$ on $\Omega$, the Berezin transform of $\mu$ is defined by
\begin{align*}
\mathcal{B}_{\mu}:\Omega & \rightarrow\mathbb{R}^{+}\cup\left\{ \infty\right\} \\
w & \rightarrow\intop_{\Omega}\frac{\left|K\left(z,w\right)\right|^{2}}{K\left(w,w\right)}d\mu\left(z\right).
\end{align*}
Note that since $\Omega$ is bounded, this function is well-defined. Let $k_{w}$ be the normalized Bergman kernel, defined by
\[
k_{w}\left(z\right):=\frac{K\left(z,w\right)}{\sqrt{K\left(w,w\right)}}.
\]
Since $k_{w}\in A^{2}\left(\Omega\right)$ and \begin{equation}
\intop_{\Omega}\left|k_{w}\left(z\right)\right|^{2}dV\left(z\right)=1,\;\forall w\in\Omega,\label{eq:keye1}
\end{equation}
a necessary condition for \eqref{intro1} to be satisfied is that
$\mathcal{B}_{\mu}\in L^{\infty}\left(\Omega\right)$. This leads
us to the question whether this necessary condition is also sufficient.
 
Carleson measures have been studied for certain classes of pseudoconvex
domains, particularly in connection with many important operators in
complex analysis such as: Hankel  and  Toeplitz operators, see e.g. \cite{BBCZ90,Li92,Li94,ARS12}; and its
generalisation to other spaces in harmonic analysis \cite{CiWo82,ARS02,Tch08,ARS08}.  For bounded symmetric domains (e.g. unit polydiscs), B{\'e}koll{\'e}-Berger-Coburn-Zhu
\cite[Theorem 8]{BBCZ90} proved that $\mathcal{B}_{\mu}\in L^{\infty}$
is also a sufficient condition for a Carleson measure $\mu$.  For strongly
pseudoconvex domains with smooth boundary, the same property was confirmed
by H. Li  \cite[Theorem C]{Li92}. In this case of domains, Abate
and Saracco \cite[Theorem 1.1]{AS11} proved that the boundedness
of $\mathcal{B}_{\mu}$ on $\Omega$ is also a necessary and sufficient
condition for Carleson measures of $A^{p}\left(\Omega\right)$, for
any $p\geq1$.  Turning to the question of vanishing Carleson measures, it has been shown that (see \cite{BBCZ90,Li92,ARS12}), for any strongly pseudoconvex domain or  bounded symmetric domain $\Omega$, $\mu$ is a vanishing Carleson measure if and only if ${\displaystyle \lim_{z\rightarrow\partial\Omega}}\mathcal{B}_{\mu}\left(z\right)=0$. Here $\partial\Omega$ denotes the topological boundary of $\Omega$. The ``only if'' statement comes easily from the fact that $k_{z}\rightarrow0$
weakly in $A^{2}\left(\Omega\right)$ as $z\rightarrow\partial\Omega$.

To our knowledge; however, it is not yet much studied to what extent these characterizations are still true for a general pseudoconvex domain in $\mathbb{C}^{n}$. For the sake of illustration, let us first consider the class of bounded convex domains in $\mathbb{C}^{n}$ (with no boundary regularity assumptions). In general, a domain in this class is neither
a strongly pseudoconvex domain nor a symmetric domain (for example, consider the Thullen domains $\left\{ \left(z_{1},z_{2}\right)\in\mathbb{C}^{2}:\left|z_{1}\right|^{p}+\left|z_{2}\right|^{q}<1\right\} $,
 $p,q\geq1$). Thus in this case, it is reasonable
to ask whether the same behaviours of the Berezin transform would capture
the characterisations of Carleson measures. We shall show that this is the case. The method we give here is elementary.
The argument combines known results for intrinsic geometry on sublevel
sets of the pluricomplex Green function and an estimate due to B{\l}ocki \cite{Blok14}. Our approach also extends previous work on strongly pseudoconvex or bounded symmetric domains to a wider class of domains.

Let $G_{\Omega}\left(\cdot,w\right)$ be the pluricomplex Green function
with pole $w\in\Omega$, defined by
\[
G_{\Omega}\left(\cdot,w\right):=\sup\left\{ u\left(\cdot\right):u\in PSH^{-}\left(\Omega\right),\limsup_{z\rightarrow w}\left(u\left(z\right)-\log\left|z-w\right|\right)<\infty\right\} .
\]
Here $PSH^{-}\left(\Omega\right)$ denotes the set of all negative
plurisubharmonic functions on $\Omega$. The following useful estimate was proved by B{\l}ocki \cite{Blok14}, which is  a sharper version of a previous estimate of Herbort
\cite{Her99}: 
for any bounded pseudoconvex domain $\Omega\,\subset\,\mathbb{C}^{n}$  and $M>0$,
\begin{equation}
\intop_{\left\{ G_{\Omega}\left(\cdot,z\right)<-M\right\} }\left|h\left(w\right)\right|^{2}dV\left(w\right)\geq e^{-2nM}\frac{\left|h\left(z\right)\right|^{2}}{K\left(z,z\right)},\;\forall z\in\Omega,\,\forall h\in\mathcal{O}\left(\Omega\right).\label{eq:He-Bo}
\end{equation}
Using this estimate, it follows that 
\begin{equation}
\intop_{\Omega}\left|h\left(z\right)\right|^{2}d\mu\left(z\right)\leq e^{2nM}\intop_{\Omega}\intop_{\Omega}\mathbbm{1}_{A_{z,M}}\left(w\right)K\left(z,z\right)\left|h\left(w\right)\right|^{2}dV\left(w\right)d\mu\left(z\right),\label{eq:kein2}
\end{equation}
where $A_{z,M}:=\left\{ G_{\Omega}\left(\cdot,z\right)<-M\right\} $.  Note that it is safe to write the iterated integral on the RHS of
\eqref{eq:kein2}, because $G_{\Omega}$ is upper semicontinuous on
$\Omega\times\Omega$ (see \cite[Corollary 6.2.6]{Kli91}), so the
integrand is measurable.

From this we immediately obtain:

\begin{theorem}\label{thm:re1}
Let $\Omega$ be a bounded pseudoconvex domain in $\mathbb{C}^{n}$.
Assume that there are $M>0$ and $c<1$ such that $d_{S}\left(z,w\right)<c$,
for any $w\in A_{z,M}$ and any $z\in\Omega$; where $d_{S}$ is the
Skwarczy{\'n}ski distance defined by
\[
d_{S}\left(z,w\right):=\left(1-\frac{\left|K\left(z,w\right)\right|}{\sqrt{K\left(z,z\right)}\sqrt{K\left(w,w\right)}}\right)^{\frac{1}{2}};\;\;z,w\in\Omega.
\]
Then 
\[
\intop_{\Omega}\left|h\left(z\right)\right|^{2}d\mu\left(z\right)\leq C\intop_{\Omega}\mathcal{B}_{\mu}\left(z\right)\left|h\left(z\right)\right|^{2}dV\left(z\right),\;\forall h\in A^{2}\left(\Omega\right).
\]
Here $C:=\frac{e^{2nM}}{\left(1-c^{2}\right)^{2}}$. As a consequence, $\mu$ is a Carleson measure iff $\mathcal{B}_{\mu}\in L^{\infty}\left(\Omega\right)$.

\end{theorem}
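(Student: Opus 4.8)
The plan is to start from inequality~\eqref{eq:kein2}, which is already in hand, and interchange the order of integration. Since both integrands are non-negative and the relevant sets are measurable (the integrand in \eqref{eq:kein2} is measurable because $G_{\Omega}$ is upper semicontinuous on $\Omega\times\Omega$, as remarked above), the Tonelli theorem applies and gives
\[
\intop_{\Omega}\left|h\left(z\right)\right|^{2}d\mu\left(z\right)\leq e^{2nM}\intop_{\Omega}\left|h\left(w\right)\right|^{2}\left(\intop_{\left\{ z\in\Omega\,:\,w\in A_{z,M}\right\} }K\left(z,z\right)d\mu\left(z\right)\right)dV\left(w\right).
\]
The whole theorem then reduces to estimating the inner integral by a fixed multiple of $\mathcal{B}_{\mu}\left(w\right)$.

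The key step is the following elementary identity, read directly off the definition of $d_{S}$: for all $z,w\in\Omega$,
\[
\frac{\left|K\left(z,w\right)\right|^{2}}{K\left(z,z\right)K\left(w,w\right)}=\left(1-d_{S}\left(z,w\right)^{2}\right)^{2}.
\]
By hypothesis, whenever $w\in A_{z,M}$ we have $d_{S}\left(z,w\right)<c$, so $1-d_{S}\left(z,w\right)^{2}>1-c^{2}>0$, and hence the pointwise bound
\[
K\left(z,z\right)\leq\frac{1}{\left(1-c^{2}\right)^{2}}\,\frac{\left|K\left(z,w\right)\right|^{2}}{K\left(w,w\right)}\qquad\text{holds whenever }w\in A_{z,M}.
\]
Plugging this into the inner integral and then enlarging the domain of integration back to all of $\Omega$ yields
\[
\intop_{\left\{ z\,:\,w\in A_{z,M}\right\} }K\left(z,z\right)d\mu\left(z\right)\leq\frac{1}{\left(1-c^{2}\right)^{2}}\intop_{\Omega}\frac{\left|K\left(z,w\right)\right|^{2}}{K\left(w,w\right)}d\mu\left(z\right)=\frac{\mathcal{B}_{\mu}\left(w\right)}{\left(1-c^{2}\right)^{2}}.
\]
Combining the two displays gives the asserted inequality with $C=e^{2nM}/\left(1-c^{2}\right)^{2}$.

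For the final consequence: if $\mathcal{B}_{\mu}\in L^{\infty}\left(\Omega\right)$, the inequality just proved immediately gives \eqref{intro1} with constant $C\left\Vert \mathcal{B}_{\mu}\right\Vert _{\infty}$, so $\mu$ is a Carleson measure. Conversely, if $\mu$ is a Carleson measure, testing \eqref{intro1} against $h=k_{w}$ and using \eqref{eq:keye1} shows $\mathcal{B}_{\mu}\left(w\right)=\intop_{\Omega}\left|k_{w}\right|^{2}d\mu\leq C$ for every $w\in\Omega$, i.e. $\mathcal{B}_{\mu}\in L^{\infty}\left(\Omega\right)$; this is the necessity already noted in the introduction.

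Given \eqref{eq:kein2} there is no genuine obstacle: the only mild points of care are justifying the Tonelli interchange (covered by the semicontinuity remark) and applying the kernel bound only on the slice $\left\{ z:w\in A_{z,M}\right\} $ before reverting to the full domain. The substantive content of the statement is the geometric hypothesis on sublevel sets of $G_{\Omega}$ and the Skwarczy\'{n}ski distance, which is exactly what makes the chain of estimates close; verifying that hypothesis in concrete classes of domains is where the real work lies, and that is taken up afterwards.
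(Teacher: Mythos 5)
Your proof is correct and follows essentially the same route as the paper: both start from \eqref{eq:kein2}, use the hypothesis $d_{S}(z,w)<c$ on $A_{z,M}$ to get the pointwise bound $K(z,z)\leq(1-c^{2})^{-2}\,|K(z,w)|^{2}/K(w,w)$, and then swap the order of integration to produce $\mathcal{B}_{\mu}(w)$. The only difference is that you spell out the Tonelli step and the slice $\{z:w\in A_{z,M}\}$ explicitly, which the paper leaves implicit.
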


\begin{proof}
For $w\in A_{z,M}$, we have
\[
\frac{\left|K\left(z,w\right)\right|}{\sqrt{K\left(z,z\right)}\sqrt{K\left(w,w\right)}}>1-c^{2}.
\]
Using \eqref{eq:kein2}, we see that
\begin{align*}
\intop_{\Omega}\left|h\left(z\right)\right|^{2}d\mu\left(z\right) & \leq\frac{e^{2nM}}{\left(1-c^{2}\right)^{2}}\intop_{\Omega}\intop_{\Omega}\frac{\left|K\left(z,w\right)\right|^{2}}{K\left(w,w\right)}\left|h\left(w\right)\right|^{2}d\mu\left(z\right)dV\left(w\right)\\
 & = C\intop_{\Omega}\mathcal{B}_{\mu}\left(w\right)\left|h\left(w\right)\right|^{2}dV\left(w\right),
\end{align*}
as desired.

\end{proof}

Let us apply this to the case of bounded convex domains $\Omega$ in $\mathbb{C}^{n}$. It is known that (see \cite[Corollary 6.5.3]{Kli91}) \[
A_{z,M}\subset\left\{ w\in\Omega:d_{C}\left(w,z\right)<\arctanh\left(e^{-M}\right)\right\} ,
\]
where $d_{C}$ is the Carath{\'e}odory distance. Since $\Omega$ is convex, by a well-known result of Lempert \cite{Lem81}, we have $d_{C}\equiv d_{K}$, where $d_{K}$ is the Kobayashi distance. It follows that
\[
A_{z,M}\subset\left\{ w\in\Omega:d_{K}\left(w,z\right)<\arctanh\left(e^{-M}\right)\right\} .
\]
On the other hand, $d_{S}\leq\left.d_{B}\right/\sqrt{2}$,
see \cite[Corollary 6.4.7]{JP93}, where $d_{B}$ denotes the Bergman
distance. Note that this fact is true for a general bounded pseudoconvex domain. Finally, it is also known that the Bergman and the Kobayashi  metrics are equivalent on bounded convex domains, e.g. \cite{NP03}. Thus $d_{B}\leq Cd_{K}$,
for some positive constant $C$. Therefore, for $M$ large enough, $d_{S}\left(w,z\right)<\left.1\right/2$,
for any $w\in A_{z,M}$, so the condition of Theorem \ref{thm:re1} is satisfied. Thus we obtain the following corollary:

\begin{corollary}
A measure $\mu$ on a bounded convex domain $\Omega$ is a Carleson measure  iff
$\mathcal{B}_{\mu}\in L^{\infty}\left(\Omega\right)$.
\end{corollary}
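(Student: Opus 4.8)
The plan is to deduce the Corollary directly from Theorem~\ref{thm:re1} by verifying its hypothesis on an arbitrary bounded convex domain, together with the elementary necessity argument already recorded in the introduction. Necessity of $\mathcal{B}_{\mu}\in L^{\infty}(\Omega)$ is immediate: testing \eqref{intro1} against the normalized kernels $k_{w}$ and using \eqref{eq:keye1} gives $\mathcal{B}_{\mu}(w)=\int_{\Omega}|k_{w}|^{2}\,d\mu\le C$ for every $w\in\Omega$. So the whole content is sufficiency, and by Theorem~\ref{thm:re1} it is enough to produce $M>0$ and $c<1$ such that $d_{S}(z,w)<c$ whenever $w\in A_{z,M}$ and $z\in\Omega$.

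First I would recall the sublevel-set inclusion for the pluricomplex Green function in terms of the Carath\'eodory distance, $A_{z,M}\subset\{w\in\Omega:d_{C}(w,z)<\arctanh(e^{-M})\}$ \cite{Kli91}. Since $\Omega$ is convex, Lempert's theorem \cite{Lem81} gives $d_{C}\equiv d_{K}$, so the inclusion holds with $d_{K}$ in place of $d_{C}$. Then I would chain two comparison inequalities: the universal bound $d_{S}\le d_{B}/\sqrt{2}$, valid on any bounded pseudoconvex domain \cite{JP93}, and the equivalence $d_{B}\le C_{0}\,d_{K}$ of the Bergman and Kobayashi distances on bounded convex domains \cite{NP03}, with $C_{0}$ independent of the points. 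For $w\in A_{z,M}$ this gives
\[
d_{S}(w,z)\le\tfrac{1}{\sqrt{2}}\,d_{B}(w,z)\le\tfrac{C_{0}}{\sqrt{2}}\,d_{K}(w,z)=\tfrac{C_{0}}{\sqrt{2}}\,d_{C}(w,z)<\tfrac{C_{0}}{\sqrt{2}}\,\arctanh\!\left(e^{-M}\right).
\]
Since $\arctanh(e^{-M})\to0$ as $M\to\infty$, I fix $M$ so large that the right-hand side is at most $\tfrac12$; then $c:=\tfrac12<1$ works uniformly in $z$, and the hypothesis of Theorem~\ref{thm:re1} is met.

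Theorem~\ref{thm:re1} then yields $\int_{\Omega}|h|^{2}\,d\mu\le C\int_{\Omega}\mathcal{B}_{\mu}|h|^{2}\,dV$ for all $h\in A^{2}(\Omega)$; if $\mathcal{B}_{\mu}\in L^{\infty}(\Omega)$, the right-hand side is bounded by $C\,\|\mathcal{B}_{\mu}\|_{\infty}\int_{\Omega}|h|^{2}\,dV$, so \eqref{intro1} holds and $\mu$ is Carleson. Together with necessity, this proves the Corollary. The step I expect to require the most care is the \emph{uniformity} of the constant $C_{0}$ in the Bergman--Kobayashi comparison: what is needed is a single constant valid at all pairs of points of the fixed convex domain, not merely a local or asymptotic comparison near the boundary. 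This is precisely what the cited equivalence delivers, and it is the one place where convexity (via Lempert's identification $d_{C}=d_{K}$ and the metric equivalence) is genuinely used rather than mere pseudoconvexity; everything else is bookkeeping with inequalities already in hand.
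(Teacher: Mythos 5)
Your proposal is correct and follows essentially the same route as the paper: it verifies the hypothesis of Theorem~\ref{thm:re1} on a bounded convex domain by combining the Green-function sublevel-set inclusion via the Carath\'eodory distance, Lempert's identification $d_{C}\equiv d_{K}$, the universal bound $d_{S}\le d_{B}/\sqrt{2}$, and the Bergman--Kobayashi equivalence on convex domains, then takes $M$ large. This is exactly the chain of comparisons the paper uses, together with the standard necessity argument via the normalized kernels.
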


Using the estimate \eqref{eq:He-Bo}, we now claim the characterization
of vanishing Carleson measures on convex domains:

\begin{corollary}\label{corconv}
A  Carleson measure $\mu$ on a bounded convex domain $\Omega$ is  vanishing  
iff $\mathcal{B}_{\mu}\left(z\right)\rightarrow0$ as $z\rightarrow\partial\Omega$.

\end{corollary}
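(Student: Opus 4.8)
The plan is to prove both implications of Corollary \ref{corconv}, with the "only if" direction being essentially standard and the "if" direction using the machinery built above together with the quantitative estimate \eqref{eq:He-Bo}.

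For the "only if" direction: suppose $\mu$ is vanishing. As noted in the introduction, $k_z \rightarrow 0$ weakly in $A^2(\Omega)$ as $z \rightarrow \partial\Omega$ (this uses that $K(w,w) \rightarrow \infty$ at the boundary, which holds for bounded convex — indeed bounded pseudoconvex — domains, or more elementarily that the $k_z$ converge to $0$ on compact subsets of $\Omega$ and are $L^2$-normalized). Compactness of the inclusion $A^2(\Omega) \hookrightarrow L^2(\Omega,\mu)$ then forces $\|k_z\|_{L^2(\mu)} \rightarrow 0$, and since $\mathcal{B}_\mu(z) = \|k_z\|_{L^2(\mu)}^2$, we get $\mathcal{B}_\mu(z) \rightarrow 0$ as $z \rightarrow \partial\Omega$.

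For the "if" direction: suppose $\mathcal{B}_\mu(z) \rightarrow 0$ as $z \rightarrow \partial\Omega$. I want to show the inclusion operator is compact, i.e. that any sequence $h_j \in A^2(\Omega)$ with $\|h_j\|_{L^2(V)} \leq 1$ and $h_j \rightarrow 0$ weakly has $\|h_j\|_{L^2(\mu)} \rightarrow 0$; equivalently $h_j \rightarrow 0$ uniformly on compact subsets of $\Omega$. Fix $\varepsilon > 0$. By hypothesis there is a compact $E \subset \Omega$ such that $\mathcal{B}_\mu(z) < \varepsilon$ for all $z \in \Omega \setminus E$. The key point, as in the proof of Theorem \ref{thm:re1} applied to convex domains (where the condition of that theorem holds for $M$ large), is that one can localize the estimate: splitting the integral $\int_\Omega |h_j|^2 d\mu = \int_{E} + \int_{\Omega \setminus E}$, the second piece is controlled by $C \int_{\Omega} \mathcal{B}_\mu(w)\cdot(\text{something supported near the boundary}) |h_j(w)|^2 dV(w)$. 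More carefully, one runs the argument of Theorem \ref{thm:re1} but tracks where the pole $z$ sits: if $z \in \Omega \setminus E'$ for a slightly larger compact set $E'$, then $A_{z,M}$ stays outside $E$ (using that the sublevel sets $A_{z,M}$ shrink toward $z$ in the Kobayashi/Bergman metric, uniformly), so on that region the weight $\mathcal{B}_\mu$ in the resulting bound is $< \varepsilon$; thus $\int_{\Omega \setminus E'} |h_j|^2 d\mu \leq C\varepsilon \|h_j\|_{L^2(V)}^2 \leq C\varepsilon$. The remaining piece $\int_{E'} |h_j|^2 d\mu \leq \mu(E') \sup_{E'} |h_j|^2 \rightarrow 0$ since $h_j \rightarrow 0$ uniformly on the compact set $E'$ (Montel/Bergman reproducing estimates), and $\mu(E') < \infty$. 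Letting $j \rightarrow \infty$ then $\varepsilon \rightarrow 0$ gives the conclusion.

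The main obstacle is making the localization in the "if" direction precise: I need that for a compact $E \subset \Omega$ there is a compact $E' \supset E$ such that $z \notin E'$ implies $A_{z,M} \cap E = \emptyset$. This follows from the inclusion $A_{z,M} \subset \{w : d_K(w,z) < \arctanh(e^{-M})\}$ established above for convex domains, together with the fact that the Kobayashi ball of fixed radius around a point approaching $\partial\Omega$ eventually avoids any fixed compact set — a standard completeness/properness property of $d_K$ on bounded convex domains. One then takes $E'$ to be a suitable Kobayashi-metric neighborhood of $E$. With that in hand, the rest is a routine repetition of the computation in the proof of Theorem \ref{thm:re1} with the domain of the outer integral restricted.
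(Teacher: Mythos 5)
Your proof is correct in substance, but the ``if'' direction follows a genuinely different route from the paper's. The paper does not localize the sublevel sets $A_{z,M}$ at all: it fixes an exhaustion $\Omega_j\Subset\Omega$, observes that the hypothesis makes $\mathcal{B}_{\mu}$ and the tail functions $R_j(w)=\int_{\Omega}\frac{|K(z,w)|^2}{K(w,w)}(1-\mathbbm{1}_{\Omega_j}(z))\,d\mu(z)$ continuous on $\overline{\Omega}$, applies Dini's theorem to get $R_j\to 0$ uniformly, and concludes that the compact operators $i_{\Omega_j}$ converge in norm to the inclusion. Your argument instead uses the sequential criterion for compactness and a geometric localization: from $A_{z,M}\subset\{w: d_K(w,z)<\arctanh(e^{-M})\}$ plus the fact that bounded convex domains are complete (finitely compact) hyperbolic, you arrange that poles $z$ outside a compact $E'$ have $A_{z,M}$ disjoint from $E$, so the inner integral after Fubini is bounded by $\mathbbm{1}_{\Omega\setminus E}(w)\mathcal{B}_{\mu}(w)<\varepsilon$. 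Both work; the paper's version trades your extra geometric input (complete hyperbolicity of convex domains) for Dini's theorem and continuity of $R_j$ up to the boundary, and has the advantage of applying verbatim to any domain satisfying the hypothesis of Theorem \ref{thm:re1} (this is exactly Theorem \ref{thm:re2}(i)), whereas your localization leans on convexity a second time. Your splitting $\int_{E'}+\int_{\Omega\setminus E'}$ with $\sup_{E'}|h_j|\to 0$ by Montel is fine since $\mu$ is finite.

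One caution on the ``only if'' direction: the weak convergence $k_z\to 0$ is correctly invoked, but neither of your parenthetical justifications actually proves it. The condition $K(w,w)\to\infty$ does not imply weak nullity of $k_w$ (it holds on $\mathbb{H}_k$, where the paper shows weak nullity fails), and the locally uniform convergence $k_z\to 0$ is precisely the nontrivial content (equivalent, given $\|k_z\|=1$, to the weak convergence), not an elementary substitute for it. For convex domains this is a theorem of \v{C}u\v{c}kovi\'{c}--\c{S}ahuto\u{g}lu, which is what the paper cites; you should cite it rather than suggest it is automatic.
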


\begin{proof}
Since $k_{w}\rightarrow0$ weakly in $A^{2}\left(\Omega\right)$
as $w\rightarrow\partial\Omega$ (see \cite[Lemma 4.9]{CS18}),
the ``only if'' part follows.

In the converse direction, notice that $\mathcal{B}_{\mu}$
is continuous on $\Omega$. This follows from the dominated
convergence theorem. The hypothesis thus
gives $\mathcal{B}_{\mu}\in C\left(\overline{\Omega}\right)$.
Choose a sequence of domains $\left\{ \Omega_{j}\right\} $ such that
$\overline{\Omega}_{j}\Subset\Omega_{j+1}\Subset\Omega$ and
$\cup_{j=1}^{\infty}\Omega_{j}=\Omega$. For each $j$, the
function $R_{j}$, defined by
\[
R_{j}\left(w\right):=\intop_{\Omega}\frac{\left|K\left(z,w\right)\right|^{2}}{K\left(w,w\right)}\left(1-\mathbbm{1}_{\Omega_{j}}\left(z\right)\right)\,d\mu\left(z\right),\;w\in\Omega,
\]
is also continuous on $\overline{\Omega}$ for the same reason.
Again, by the dominated convergence theorem, $R_{j}\searrow0$ pointwise.
 From Dini's theorem, we conclude that $R_{j}\rightarrow0$ uniformly
on $\Omega$ as $j\rightarrow\infty$. On the other hand, for $h\in A^{2}\left(\Omega\right)$,
we have 
\begin{align*}
\intop_{\left.\Omega\right\backslash \Omega_{j}}\left|h\right|^{2}d\mu & \leq e^{2nM}\intop_{\Omega}\intop_{\Omega}\left(1-\mathbbm{1}_{\Omega_{j}}\left(z\right)\right)\mathbbm{1}_{A_{z,M}}\left(w\right)K\left(z,z\right)\left|h\left(w\right)\right|^{2}d\mu\left(z\right)dV\left(w\right)\\
 & \leq C\intop_{\Omega}\intop_{\Omega}\frac{\left|K\left(z,w\right)\right|^{2}}{K\left(w,w\right)}\left(1-\mathbbm{1}_{\Omega_{j}}\left(z\right)\right)\left|h\left(w\right)\right|^{2}d\mu\left(z\right)dV\left(w\right)\\
 & = C\intop_{\Omega}R_{j}\left(w\right)\left|h\left(w\right)\right|^{2}dV\left(w\right).
\end{align*}
Therefore the operators
\begin{align*}i_{\Omega_{j}}:A^{2}\left(\Omega\right) & \rightarrow L^{2}\left(\Omega,\mu\right)\\
h & \rightarrow\mathbbm{1}_{\Omega_{j}}h
\end{align*}
converge in norm to $i_{\Omega}$ as $j\rightarrow\infty$. The desired claim now follows
since (by Montel's theorem) $i_{\Omega_{j}}$ are compact
operators.

\end{proof}

 Let us discuss an extension of this approach to other classes of bounded pseudoconvex
domains. We will see in later that the condition in Theorem \ref{thm:re1}
is true for the classical Hartogs triangle $\mathbb{H}=\left\{ \left(z_{1},z_{2}\right)\in\mathbb{C}^{2}:\left|z_{1}\right|<\left|z_{2}\right|<1\right\} $,
which is an example of a non-hyperconvex domain. Note also that this
condition is also satisfied for strongly
pseudoconvex domains. This comes from the fact that for this class
of domains, the Carath{\'e}odory metric,  the
Kobayashi metric and the Bergman metric are equivalent, see \cite{DieK70,Gra75}. Also, in view
of the above example, the condition in Theorem \ref{thm:re1} is satisfied
if the Bergman distance $d_{B}\left(z,w\right)$ can be made small
for any $w$ in sublevel sets $A_{z,M}$, for some fixed (large) constant $M$. Such an estimate has been recently obtained by Zimmer \cite[Theorem 1.10]{Zim20} for domains having \emph{bounded intrinsic geometry}. This class includes in particular homogeneous domains (so, in particular, bounded symmetric domains), finite type domains in $\mathbb{C}^{2}$, strongly
pseudoconvex domains, convex domains,   $\mathbb{C}$-convex domains which are Kobayashi hyperbolic, simply connected domains which have a complete K{\"a}hler metric with pinched negative sectional curvature, and any domain that is biholomorphic to one of the previously mentioned domains. Let $\Omega$ be a bounded domain having bounded intrinsic geometry,
then $\Omega$ must be pseudoconvex \cite[Corollary 1.3]{Zim20}.
From \cite[Theorem 1.10]{Zim20}, there are constants $C,\tau>0$
such that for any $M>0$, we have $d_{B}\left(w,z\right)<e^{C-M},$
for all $w\in A_{z,M}$ with $d_{B}\left(w,z\right)<\tau$. On the
other hand, it can be seen from the proof of Theorem 6.4 on page 19
that the condition $d_{B}\left(w,z\right)<\tau$ is automatically
satisfied for any $w\in A_{z,M}$, provided that $M$ is large enough.
We conclude that any bounded domain with bounded intrinsic geometry
enjoys the property stated in Theorem \ref{thm:re1}. Therefore $\mathcal{B}_{\mu}\in L^{\infty}\left(\Omega\right)$
is a necessary and sufficient condition for a Carleson measure $\mu$
on any  domain $\Omega$ in this class.

Now let us generalize Corollary \ref{corconv} to other classes of
domains. The following statement is clear from the proof of Corollary
\ref{corconv}.

\begin{theorem}\label{thm:re2}
Let $\Omega\subset\mathbb{C}^{n}$ be a bounded pseudoconvex domain
satisfying the condition in Theorem \ref{thm:re1}. Then the following statements hold:
\begin{enumerate}[label=(\roman*), font=\normalfont]
\item[(i)]  If a Carleson measure $\mu$ on $\Omega$ satisfies the condition: $\mathcal{B}_{\mu}\left(z\right)\rightarrow0$
as $z\rightarrow\partial\Omega$ then $\mu$ is vanishing.
\item[(ii)] If  $\mu$ is a vanishing Carleson measure on $\Omega$, and assume 
further that 
\begin{equation}
k_{z}\rightarrow0\;\text{weakly in \ensuremath{A^{2}\left(\Omega\right)}\text{ as }}\ensuremath{z\rightarrow\partial\Omega,}\label{eq:condcom}
\end{equation}
then $\mathcal{B}_{\mu}\left(z\right)\rightarrow0$
as $z\rightarrow\partial\Omega$.
\end{enumerate}
\end{theorem}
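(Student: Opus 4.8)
The plan is to observe that both parts of Theorem \ref{thm:re2} are essentially transcriptions of the proof of Corollary \ref{corconv}, with the single geometric input---the equivalence of the Bergman and Kobayashi metrics on convex domains---replaced by the abstract hypothesis of Theorem \ref{thm:re1}. So first I would record which ingredients of the proof of Corollary \ref{corconv} were actually used. For the ``only if'' direction (here part (ii)) the sole input was the weak convergence $k_z\to 0$; for the ``if'' direction (here part (i)) the inputs were: (a) the estimate \eqref{eq:He-Bo}, valid for all bounded pseudoconvex $\Omega$; (b) the conclusion of Theorem \ref{thm:re1}, namely the domination $\int_\Omega|h|^2\,d\mu \leq C\int_\Omega \mathcal{B}_\mu|h|^2\,dV$, which holds precisely under the hypothesis we are now assuming; and (c) general soft facts---continuity of $\mathcal{B}_\mu$ and of the truncated Berezin-type functions $R_j$ via dominated convergence, Dini's theorem on the exhaustion $\{\Omega_j\}$, and compactness of the restricted inclusions $i_{\Omega_j}$ via Montel. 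None of (c) used convexity.

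Concretely, for part (ii) I would note that weak convergence $k_z\rightharpoonup 0$ together with compactness of the inclusion $\iota: A^2(\Omega)\hookrightarrow L^2(\Omega,\mu)$ forces $\|\iota k_z\|_{L^2(\mu)}\to 0$; but $\|\iota k_z\|_{L^2(\mu)}^2 = \int_\Omega |k_z|^2\,d\mu = \mathcal{B}_\mu(z)$ by the very definition of the Berezin transform and the normalization $\|k_z\|_{A^2}=1$. Hence $\mathcal{B}_\mu(z)\to 0$ as $z\to\partial\Omega$. This is exactly the argument sketched in the introduction for strongly pseudoconvex and symmetric domains, and it needs nothing beyond \eqref{eq:condcom}.

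For part (i) I would reproduce verbatim the argument of Corollary \ref{corconv} with the constant $C$ now coming from Theorem \ref{thm:re1}: continuity of $\mathcal{B}_\mu$ on $\Omega$ plus the hypothesis $\mathcal{B}_\mu\to 0$ at the boundary gives $\mathcal{B}_\mu\in C(\overline{\Omega})$; fix an exhaustion $\overline{\Omega}_j\Subset\Omega_{j+1}\Subset\Omega$ with union $\Omega$; the functions $R_j(w):=\int_\Omega \frac{|K(z,w)|^2}{K(w,w)}(1-\mathbbm{1}_{\Omega_j}(z))\,d\mu(z)$ are continuous on $\overline{\Omega}$ and decrease pointwise to $0$, so by Dini's theorem $R_j\to 0$ uniformly on $\Omega$; then the same three-line computation as in Corollary \ref{corconv}---using \eqref{eq:kein2} and the domination from Theorem \ref{thm:re1}---bounds $\int_{\Omega\setminus\Omega_j}|h|^2\,d\mu$ by $C\int_\Omega R_j(w)|h(w)|^2\,dV(w)\leq C\|R_j\|_\infty\|h\|_{A^2}^2$, showing $\|i_\Omega - i_{\Omega_j}\|\to 0$; and since each $i_{\Omega_j}$ is compact by Montel's theorem, $i_\Omega$ is compact, i.e.\ $\mu$ is vanishing.

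There is no real obstacle; the point of stating Theorem \ref{thm:re2} is modularity, isolating exactly where convexity entered Corollary \ref{corconv} (only through verifying the hypothesis of Theorem \ref{thm:re1}) and where the extra assumption \eqref{eq:condcom} is genuinely needed (only for the direction $\mu$ vanishing $\Rightarrow$ $\mathcal{B}_\mu\to 0$, since on general pseudoconvex domains weak null convergence of $k_z$ at the boundary can fail). If anything requires a word of care it is the measurability and finiteness needed to apply dominated convergence and Dini to the $R_j$, but these are inherited from the boundedness of $\Omega$, the upper semicontinuity of $G_\Omega$ noted after \eqref{eq:kein2}, and the finiteness of $\mu$, exactly as in the proof of Corollary \ref{corconv}. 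Thus the proof is a direct citation of that proof, and I would write it as such.
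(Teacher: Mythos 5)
Your proposal is correct and is exactly the paper's approach: the paper proves Theorem \ref{thm:re2} by simply remarking that it is clear from the proof of Corollary \ref{corconv}, and your write-up correctly identifies that the only convexity input there was the verification of the hypothesis of Theorem \ref{thm:re1}, with part (ii) following from weak null convergence of $k_z$ plus compactness of the inclusion. No gaps.
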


The condition \eqref{eq:condcom} is true for pseudoconvex domains with smooth boundary, and convex
domains \cite{CS18}. It is also satisfied for domains having a
certain upper bound estimate on the Bergman kernel, such as bounded symmetric
domains \cite{BBCZ90}. Consequently, the characterization $\mathcal{B}_{\mu}\left(z\right)\rightarrow0$ as $z\rightarrow\partial\Omega$
applies to these classes of domains.

However, unlike \eqref{eq:keye1}, which is true for any pseudoconvex
domain and used to obtain the necessary condition for a Carleson
measure, the condition \eqref{eq:condcom} is not the case for any pseudoconvex
domain. An example was already given in \cite{CS20}. This motivates the need to examine characterizations of Carleson measures on pseudoconvex domains with an irregular boundary.

In the rest of this note, we study the analogous questions for the fat Hartogs triangles: $$\mathbb{H}_{k}:=\left\{ \left(z_{1},z_{2}\right)\in\mathbb{C}^{2}:\left|z_{1}\right|^{k}<\left|z_{2}\right|<1\right\} ,\; k\in\mathbb{Z}^{+}.$$ 
It can be seen that the condition  \eqref{eq:condcom} is not satisfied for $\mathbb{H}_{k}$.
To check this, for simplicity of exposition, let us restrict to the
case $k=1$. Since $g\left(z\right):=\left.1\right/z_{2}\in A^{2}\left(\mathbb{H}\right)$, we have
\[
\left|\intop_{\mathbb{H}}\frac{K\left(z,w\right)}{\sqrt{K\left(w,w\right)}}\overline{g\left(z\right)}dV\left(z\right)\right|=\frac{1}{\left|w_{2}\right|\sqrt{K\left(w,w\right)}},\;\forall w\in\mathbb{H}.
\]
By using the explicit formula \eqref{eq:1}, the desired claim now follows since 
\[
\frac{1}{\left|w_{2}\right|\sqrt{K\left(w,w\right)}}\not\rightarrow 0\;\text{as }w\rightarrow\partial\mathbb{H},
\]
for example, by considering $w_{j}=\left(\frac{1}{j^{2}},\frac{1}{j}\right)\rightarrow\left(0,0\right)\in\partial\mathbb{H}$
as $j\rightarrow\infty$.

We will show, in particular, that a measure $\mu$ is a vanishing Carleson measure on $\mathbb{H}_{k}$ if and
only if  there exists $\delta>0$ such that $\left|z_{2}\right|^{\delta}\mathcal{B}_{\mu}\left(z\right)\rightarrow0$
as $z\rightarrow\partial\mathbb{H}_{k}$.  And we can construct a vanishing measure $\mu$ on $\mathbb{H}_{k}$ such that $\mathcal{B}_{\mu}\left(z\right)\not\rightarrow0$
as $z\rightarrow\partial\mathbb{H}_{k}$ (for example, on $\mathbb{H}$,
consider $d\mu\left(z\right)=\left(1-\left|z_{2}\right|^{2}\right)^{3}\left(1-\frac{\left|z_{1}\right|^{2}}{\left|z_{2}\right|^{2}}\right)^{3}dV\left(z\right)$,
see Remark \ref{remar}). The appearance of $\left|z_{2}\right|^{\delta}$  can be explained as a weighted distance to the singular point $\left(0,0\right)$. It therefore illustrates a different property compared to the previously known examples, and indicates that in general, characterizations of vanishing Carleson measures rely heavily on boundary regularity data of the domain.

Let us first recall some known facts about the Bergman kernel of $\mathbb{H}_{k}$. Using Bell's transformation rule under proper holomorphic maps, L.Edholm \cite{Edh16} established the following  formula for the Bergman kernel of $\mathbb{H}_{k}$:
 \begin{equation}
K\left(z,w\right)=\frac{p_{k}\left(s\right)t^{2}+q_{k}\left(s\right)t+s^{k}p_{k}\left(s\right)}{k\,\pi^{2}\left(1-t\right)^{2}\left(t-s^{k}\right)^{2}},\label{eq:1}
\end{equation}
for $z=\left(z_{1},z_{2}\right)$, $w=\left(w_{1},w_{2}\right)$ in
$\mathbb{H}_{k}$, where $s:=z_{1}\overline{w}_{1},$ $t:=z_{2}\overline{w}_{2}$,
\[
p_{k}\left(s\right):=\sum_{j=1}^{k-1}j\left(k-j\right)s^{j-1},\quad\text{and}\quad q_{k}\left(s\right):=\sum_{j=1}^{k}\left(j^{2}+\left(k-j\right)^{2}s^{k}\right)s^{j-1}.
\]
If $k=1$ then we set $p_{k}\equiv 0$ in the formula \eqref{eq:1}.

Let 
\[
\mathcal{P}\left(z,w\right):=\frac{t}{\left(1-t\right)^{2}\left(t-s^{k}\right)^{2}},
\]
and consider the function $T_{\mu}:\mathbb{H}_{k}\rightarrow\mathbb{R}^{+}\cup\left\{ \infty\right\} $
defined by 
\[
T_{\mu}\left(w\right):=\intop_{\mathbb{H}_{k}}\frac{\left|\mathcal{P}\left(z,w\right)\right|^{2}}{K\left(w,w\right)}d\mu\left(z\right).
\]

Our results can be stated as follows:

\begin{theorem}\label{thm:1}
Let $\mu$ be a  Borel measure on $\mathbb{H}_{k}$.
Then the following statements are equivalent:
\begin{enumerate}[label=(\roman*), font=\normalfont]
\item[(i)] $\mu$ is a Carleson measure on $\mathbb{H}_{k}$.
\item[(ii)]  $\mathcal{B}_{\mu}\in L^{\infty}\left(\mathbb{H}_{k}\right)$.
\item[(iii)]  $T_{\mu}\in L^{\infty}\left(\mathbb{H}_{k}\right)$.

\end{enumerate}

\end{theorem}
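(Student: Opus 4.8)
The plan is to prove the chain of equivalences by establishing (i) $\Leftrightarrow$ (ii) via Theorem \ref{thm:re1}, and then (ii) $\Leftrightarrow$ (iii) by comparing the two integrands $|K(z,w)|^2/K(w,w)$ and $|\mathcal{P}(z,w)|^2/K(w,w)$ directly, using the explicit Edholm formula \eqref{eq:1}. The implication (i) $\Rightarrow$ (ii) is already noted in the Introduction (it holds for any bounded domain, from \eqref{eq:keye1}), so the real content is (ii) $\Rightarrow$ (i) and the two-way comparison between $\mathcal{B}_\mu$ and $T_\mu$.

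For (ii) $\Rightarrow$ (i), I would verify the hypothesis of Theorem \ref{thm:re1}: that there are $M>0$ and $c<1$ with $d_S(z,w)<c$ for all $w\in A_{z,M}$ and all $z\in\mathbb{H}_k$. As the Introduction indicates, this reduces to showing the Bergman distance $d_B(z,w)$ stays bounded for $w$ in the sublevel sets $A_{z,M}$ of the pluricomplex Green function, once $M$ is large. Since $\mathbb{H}_k$ is (up to removing a lower-dimensional set) a quotient/image of a product of discs under the proper map $(z_1,z_2)\mapsto(z_1 z_2, z_2^{?})$-type covering used by Edholm, I would transfer the question to the polydisc-type model where $d_B$, $d_C$, $d_K$ are all comparable, and $A_{z,M}$ sits inside a Kobayashi ball of radius $\operatorname{arctanh}(e^{-M})$. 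Because proper holomorphic maps are distance-decreasing for the Kobayashi metric and do not increase the Green function's sublevel sets in an uncontrolled way, the sublevel set $A_{z,M}$ in $\mathbb{H}_k$ is contained in the image of a small Kobayashi ball, hence has small Bergman diameter for large $M$. Then $d_S\le d_B/\sqrt{2}$ (valid for any bounded pseudoconvex domain, as cited) finishes the check, and Theorem \ref{thm:re1} gives the Carleson bound, so $\mathcal{B}_\mu\in L^\infty$ implies $\mu$ is Carleson.

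For (ii) $\Leftrightarrow$ (iii), the key observation is that $K(z,w) = \dfrac{p_k(s)t^2 + q_k(s)t + s^k p_k(s)}{k\pi^2(1-t)^2(t-s^k)^2}$ and $\mathcal{P}(z,w) = \dfrac{t}{(1-t)^2(t-s^k)^2}$ share the same denominator, so
\[
\frac{K(z,w)}{\mathcal{P}(z,w)} \;=\; \frac{p_k(s)t + q_k(s) + s^k p_k(s)/t}{k\pi^2}.
\]
I would show this ratio is bounded above and below (in modulus) by positive constants, uniformly for $z,w\in\mathbb{H}_k$. Here one uses $|s|=|z_1\bar w_1| < |z_2||w_2| = |t| < 1$ (from the defining inequality $|z_1|^k<|z_2|$, at least when $k=1$; for general $k$ one has $|s|^k = |z_1|^k|w_1|^k < |z_2||w_2| = |t|$, i.e. $|s^k| < |t|$), so $|s^k/t| < 1$, and $q_k(0) = 1 \ne 0$ while $q_k$ has a positive coefficient sum; combined with $p_k$ having non-negative coefficients, the numerator is pinched between two positive multiples of $\max(1,|t|,\dots)$. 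Granting $c_1|\mathcal{P}(z,w)| \le |K(z,w)| \le c_2|\mathcal{P}(z,w)|$, squaring, dividing by $K(w,w)$, and integrating $d\mu$ gives $c_1^2 T_\mu(w) \le \mathcal{B}_\mu(w) \le c_2^2 T_\mu(w)$, so (ii) and (iii) are equivalent and in fact $\mathcal{B}_\mu \asymp T_\mu$ pointwise.

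The main obstacle I anticipate is the uniform two-sided bound on the ratio $K/\mathcal{P}$ near the singular part of the boundary: when $t\to 0$ the term $s^k p_k(s)/t$ could a priori blow up, and one must use $|s^k|<|t|$ carefully to see that $|s^k/t|\le 1$ and hence $|s^k p_k(s)/t| \le |p_k(s)|$ stays controlled, while simultaneously checking the numerator cannot vanish (this is where $q_k(s)$, whose constant term is $1$ and whose coefficients dominate, is essential — for $k\ge 2$ one needs that $p_k(s)t + q_k(s) + s^kp_k(s)/t$ has modulus bounded below, which requires estimating $\operatorname{Re}$ or using that $q_k(s)$ dominates the other two terms in a suitable region and a compactness/continuity argument on the closure of the relevant parameter set $\{(s,t): |s^k|\le|t|\le 1\}$). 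A secondary technical point is making the Green-function sublevel-set argument in step two rigorous across the singularity at $(0,0)$, where $\mathbb{H}_k$ fails to be hyperconvex; I would handle this by working on $\mathbb{H}_k$ minus the set $\{z_1=0\}$ where the proper covering description is clean, and noting $\{z_1=0\}\cap\mathbb{H}_k$ has measure zero so contributes nothing to the integrals.
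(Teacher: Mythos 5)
Your proposal has two genuine gaps, and unfortunately they are exactly the two points where the actual difficulty of the theorem lies for $k\geq 2$.

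First, the two-sided comparison $c_{1}\left|\mathcal{P}\left(z,w\right)\right|\leq\left|K\left(z,w\right)\right|\leq c_{2}\left|\mathcal{P}\left(z,w\right)\right|$ is false for $k\geq2$: the numerator $p_{k}\left(s\right)t^{2}+q_{k}\left(s\right)t+s^{k}p_{k}\left(s\right)$ does vanish at interior points of $\mathbb{H}_{k}\times\mathbb{H}_{k}$ (this is the zero set of the Bergman kernel studied in \cite{EdMcN17}), whereas $\left|\mathcal{P}\right|\geq1$ there. So the lower bound you flag as ``the main obstacle'' is not merely hard to prove --- it is untrue, and the pointwise equivalence $\mathcal{B}_{\mu}\approx T_{\mu}$ that you want for (ii)$\Leftrightarrow$(iii) is not available. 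Only the inequality $\left|K\right|\lesssim\left|\mathcal{P}\right|$ survives, giving (iii)$\Rightarrow$(ii). The paper closes the loop differently: (i)$\Rightarrow$(iii) is proved by checking directly, via a Forelli--Rudin computation, that $\left\Vert \mathcal{P}\left(\cdot,w\right)\right\Vert _{L^{2}}^{2}\lesssim K\left(w,w\right)$ (so boundedness of the inclusion tested on the holomorphic functions $\mathcal{P}\left(\cdot,w\right)$ yields $T_{\mu}\in L^{\infty}$); and (ii)$\Rightarrow$(i) is proved by a multiplier trick: writing $k\pi^{2}K=\mathcal{P}\cdot R$ with $R\left(z,w\right)=q_{k}\left(s\right)+p_{k}\left(s\right)t+s^{k}p_{k}\left(s\right)/t$, noting $R\left(w,w\right)\geq1$, and applying B{\l}ocki's estimate \eqref{eq:He-Bo} to $h\cdot R\left(\cdot,z\right)$ rather than to $h$, which reconstitutes the full kernel $K$ in the Schur test without ever needing a lower bound on $\left|K\right|/\left|\mathcal{P}\right|$.

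Second, your route to (ii)$\Rightarrow$(i) through Theorem \ref{thm:re1} requires $d_{S}\left(z,w\right)<c<1$ on the sublevel sets $A_{z,M}$, and the paper explicitly states in the concluding remarks that it does not know whether this holds for $\mathbb{H}_{k}$ with $k\geq2$. Your argument for it does not work: the Green-function comparison under $F\left(z_{1},z_{2}\right)=\left(z_{1}^{k}/z_{2},z_{2}\right)$ only shows that $F\left(A_{z,M}\right)$ lies in a small Kobayashi ball of the bidisc (this is \eqref{eq:ques}), and holomorphic maps being distance-decreasing goes in the wrong direction --- it gives no upper bound on $d_{B}$ or $d_{S}$ upstairs on $\mathbb{H}_{k}$. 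The paper even exhibits sequences $z_{j}=\left(-\left(1-1/j\right),1-1/j\right)$, $w_{j}=\left(1-1/j,1-1/j\right)$ for $k=2$ satisfying \eqref{eq:ques} with $d_{S}\left(z_{j},w_{j}\right)\rightarrow1$, so no argument relying only on \eqref{eq:ques} can verify the hypothesis of Theorem \ref{thm:re1}. For $k=1$ both of your steps do go through (there $\left|K\right|\approx\left|\mathcal{P}\right|$ genuinely holds), but the theorem is claimed for all $k\in\mathbb{Z}^{+}$.
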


\begin{theorem}\label{thm:2}
Let $\mu$ be a Carleson measure on $\mathbb{H}_{k}$.
Then the following statements are equivalent:
\begin{enumerate}[label=(\roman*), font=\normalfont]
\item[(i)] $\mu$ is  vanishing on $\mathbb{H}_{k}$.
\item[(ii)] There exists
$\delta>0$ such that $\left|w_{2}\right|^{\delta}\mathcal{B}_{\mu}\left(w\right)\rightarrow0$
as $w\rightarrow\partial\mathbb{H}_{k}$.
\item[(iii)] There exists
$\delta>0$ such that $\left|w_{2}\right|^{\delta}T_{\mu}\left(w\right)\rightarrow0$
as $w\rightarrow\partial\mathbb{H}_{k}$.

\end{enumerate}

\end{theorem}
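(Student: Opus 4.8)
The plan is to prove the cycle $(iii)\Rightarrow(ii)\Rightarrow(i)\Rightarrow(iii)$. Since $\mu$ is assumed Carleson, Theorem~\ref{thm:1} already places $\mathcal B_\mu$ and $T_\mu$ in $L^\infty(\mathbb H_k)$, and (a fact used in proving Theorem~\ref{thm:1}) $\mathbb H_k$ satisfies the hypothesis of Theorem~\ref{thm:re1}. Two elementary consequences of \eqref{eq:1} carry most of the work. Writing $s=z_1\overline w_1$, $t=z_2\overline w_2$, one has $K(z,w)=\frac{N_k(s,t)}{k\pi^2 t}\,\mathcal P(z,w)$ with $N_k(s,t)=p_k(s)t^2+q_k(s)t+s^kp_k(s)$; on $\mathbb H_k$ we have $|s|<|t|^{1/k}<1$, so $|s|^k<|t|<1$, $p_k,q_k$ are bounded on $\{|s|\le1\}$, and hence $|N_k(s,t)|\le C_k|t|$, giving the pointwise bound $|K(z,w)|\le C_k|\mathcal P(z,w)|$ and therefore $\mathcal B_\mu(w)\le C_k^2\,T_\mu(w)$ for all $w$; in particular $(iii)$ implies $(ii)$. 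Moreover, putting $z=w$ in \eqref{eq:1} one checks $K(w,w)\ge(k\pi^2|w_2|^2)^{-1}$, and more generally $K(w,w)\to\infty$ as $w\to\partial\mathbb H_k$.

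For $(ii)\Rightarrow(i)$ I would first note that, because $\mathcal B_\mu$ is bounded, ``$\exists\delta>0$ with $|w_2|^\delta\mathcal B_\mu(w)\to0$'' holds for some $\delta$ iff it holds for every $\delta>0$: on a sequence $w_j\to\partial\mathbb H_k$ where $|w_{j,2}|^\delta\mathcal B_\mu(w_j)\not\to0$ necessarily $|w_{j,2}|\to0$, and then $|w_{j,2}|^\delta\mathcal B_\mu(w_j)\le\|\mathcal B_\mu\|_\infty|w_{j,2}|^\delta\to0$. So fix $\delta\in(0,2/k)$; for such $\delta$ the weight $|z_2|^{-\delta}\,dV$ is a Carleson measure on $\mathbb H_k$, since $\mathbb H_k$ is Reinhardt and comparing the squared norms of the monomials $z_1^az_2^b$ in $L^2(|z_2|^{-\delta}\,dV)$ and in $A^2$ yields $\int|h|^2|z_2|^{-\delta}\,dV\le C_\delta\int|h|^2\,dV$ precisely when $\delta<2/k$. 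Now imitate Corollary~\ref{corconv}: for an exhaustion $\overline\Omega_j\Subset\Omega_{j+1}\Subset\mathbb H_k$ with $\bigcup_j\Omega_j=\mathbb H_k$, set $R_j(w)=\int_{\mathbb H_k}\frac{|K(z,w)|^2}{K(w,w)}\bigl(1-\mathbbm{1}_{\Omega_j}(z)\bigr)\,d\mu(z)$; Theorem~\ref{thm:re1} applied to $(1-\mathbbm{1}_{\Omega_j})\mu$ gives $\int_{\mathbb H_k\setminus\Omega_j}|h|^2\,d\mu\le C\int_{\mathbb H_k}R_j|h|^2\,dV$ with $C$ independent of $j$. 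Although $R_j$ need not vanish at $\partial\mathbb H_k$, one has $0\le|w_2|^\delta R_j\le|w_2|^\delta\mathcal B_\mu$, and the hypothesis makes $|w_2|^\delta\mathcal B_\mu$ continuous on $\overline{\mathbb H_k}$ with boundary value $0$; hence $|w_2|^\delta R_j$ extends continuously to $\overline{\mathbb H_k}$, decreases to $0$ pointwise, and by Dini's theorem $\varepsilon_j:=\sup_{\mathbb H_k}|w_2|^\delta R_j\to0$. Therefore $\int_{\mathbb H_k}R_j|h|^2\,dV\le\varepsilon_j\int_{\mathbb H_k}|w_2|^{-\delta}|h|^2\,dV\le C_\delta\varepsilon_j\|h\|_{A^2}^2$, so the inclusion $A^2(\mathbb H_k)\hookrightarrow L^2(\mu)$ is the operator-norm limit of the compact operators $h\mapsto\mathbbm{1}_{\Omega_j}h$, i.e.\ $\mu$ is vanishing.

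For $(i)\Rightarrow(iii)$ I would fix $\delta>0$ and set $f_w:=|w_2|^{\delta/2}\,\mathcal P(\cdot,w)/\sqrt{K(w,w)}$. Lebesgue measure is trivially Carleson, so Theorem~\ref{thm:1} gives $T_V\in L^\infty$, i.e.\ $\|\mathcal P(\cdot,w)\|_{A^2}^2=T_V(w)K(w,w)\le\|T_V\|_\infty K(w,w)$; hence $\|f_w\|_{A^2}^2\le\|T_V\|_\infty|w_2|^\delta\le\|T_V\|_\infty$ and $\{f_w\}$ is bounded in $A^2(\mathbb H_k)$. Expanding $\mathcal P$ from \eqref{eq:1} as $\sum c_{ab}\,z_1^az_2^b\,\overline{w_1^aw_2^b}$ over monomials that all lie in $A^2(\mathbb H_k)$, one finds that for any Laurent polynomial $g$ the inner product $\langle f_w,g\rangle$ equals $|w_2|^{\delta/2}\,\overline{q_g(w)}/\sqrt{K(w,w)}$ for a fixed holomorphic Laurent polynomial $q_g$; thus $\langle f_w,g\rangle\to0$ as $w\to\partial\mathbb H_k$ — when $|w_2|\to0$ from $|\langle f_w,g\rangle|\le\|f_w\|_{A^2}\|g\|_{A^2}\le\sqrt{\|T_V\|_\infty}\,|w_2|^{\delta/2}\|g\|_{A^2}$, and when $|w_2|$ stays bounded below because $|q_g(w)|$ is then bounded while $K(w,w)\to\infty$. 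By density of Laurent polynomials and the uniform bound on $\|f_w\|_{A^2}$, $f_w\rightharpoonup0$ in $A^2$, and since $\mu$ is vanishing $|w_2|^\delta T_\mu(w)=\|f_w\|_{L^2(\mu)}^2\to0$. (Repeating this paragraph with $\mathcal P(\cdot,w)$ replaced by the Bergman kernel $K(\cdot,w)$, for which $\|K(\cdot,w)\|_{A^2}^2=K(w,w)$, yields $(i)\Rightarrow(ii)$ directly, and so $(i)\Leftrightarrow(ii)\Leftrightarrow(iii)$.)

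The genuine difficulty, I expect, is the implication $(ii)\Rightarrow(iii)$. One only has $\mathcal B_\mu\lesssim T_\mu$ pointwise, and the reverse cannot hold pointwise since $N_k$ vanishes on a nonempty subset of $\mathbb H_k\times\mathbb H_k$ (for $k\ge3$ already with $z_1=0$), so $(ii)\Rightarrow(iii)$ must be routed through $(ii)\Rightarrow(i)\Rightarrow(iii)$. Consequently the weak-convergence analysis of $f_w$ in the last step — in particular the monomial description of $\mathcal P(\cdot,w)$ relative to the Bergman kernel and the role of the weight $|w_2|^\delta$ in neutralizing the singular point $(0,0)$ — is the technical heart of the proof.
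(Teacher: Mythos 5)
Your overall architecture --- the cycle $(\mathrm{iii})\Rightarrow(\mathrm{ii})\Rightarrow(\mathrm{i})\Rightarrow(\mathrm{iii})$, with $(\mathrm{iii})\Rightarrow(\mathrm{ii})$ from $|K|\lesssim|\mathcal P|$ and $(\mathrm{i})\Rightarrow(\mathrm{iii})$ from weak null convergence of $|w_2|^{\delta/2}\mathcal P(\cdot,w)/\sqrt{K(w,w)}$ plus compactness of the inclusion --- matches the paper, and your monomial/Reinhardt verification of the weak convergence is a legitimate substitute for the paper's exhaustion argument. Your $(\mathrm{ii})\Rightarrow(\mathrm{i})$ handles the singular point genuinely differently: the paper proves Lemma \ref{lem:1} ($\int_{\{|z_2|<\delta_\varepsilon\}}|h|^2\,dV\leq\varepsilon\|h\|^2$), splits $\mathbb H_k$ into $W_{\delta_\varepsilon}$ and its complement, and applies Dini only on the complement, whereas you observe that condition (ii) is independent of $\delta$, prove that $|z_2|^{-\delta}dV$ is Carleson for $\delta<2/k$ by comparing monomial norms, and run Dini globally on $|w_2|^{\delta}R_j$. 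That part is correct and arguably cleaner; note that your weighted Carleson estimate actually implies the paper's Lemma \ref{lem:1}.

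There is, however, one genuine gap. You justify the key inequality $\int_{\mathbb H_k\setminus\Omega_j}|h|^2\,d\mu\leq C\int_{\mathbb H_k}R_j|h|^2\,dV$ by asserting that $\mathbb H_k$ satisfies the hypothesis of Theorem \ref{thm:re1} and that this is ``a fact used in proving Theorem \ref{thm:1}.'' Neither is correct. The hypothesis of Theorem \ref{thm:re1} is a \emph{lower} bound $|K(z,w)|\geq(1-c^2)\sqrt{K(z,z)K(w,w)}$ on the sublevel sets $A_{z,M}$; the proof of Theorem \ref{thm:1} establishes only the two-sided estimate $|\mathcal P(z,w)|^2\approx K(z,z)K(w,w)$ on $A_z$, which together with $|K|\lesssim|\mathcal P|$ gives an upper bound for $|K|$ but no lower bound. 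You yourself point out that $K$ vanishes on a nonempty subset of $\mathbb H_k\times\mathbb H_k$ for $k\geq2$, and the paper's concluding remarks state explicitly that it is unknown whether the hypothesis of Theorem \ref{thm:re1} holds for $\mathbb H_k$ with $k\geq2$. The inequality you need is nevertheless true, but it must be derived as in the step $(\mathrm{ii})\Rightarrow(\mathrm{i})$ of the proof of Theorem \ref{thm:1}: insert the holomorphic factor $R(\cdot,z)$ with $R(z,z)\geq1$ and use
\[
\mathbbm{1}_{A_{z}}\left(w\right)K\left(z,z\right)\left|R\left(w,z\right)\right|^{2}\lesssim\mathbbm{1}_{A_{z}}\left(w\right)\frac{\left|\mathcal{P}\left(z,w\right)\right|^{2}}{K\left(w,w\right)}\left|R\left(w,z\right)\right|^{2}\lesssim\frac{\left|K\left(z,w\right)\right|^{2}}{K\left(w,w\right)},
\]
which yields the desired bound with a constant independent of $j$. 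With that substitution your argument closes; as written, the step rests on an unproved (and, per the authors, possibly false) claim.
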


\begin{remark}\normalfont
The advantage of  introducing the function $T_{\mu}$ is that it is easier to work with than $\mathcal{B}_{\mu}$ because the kernel function $\mathcal{P}$ is simpler. For example, when $k=2$, we have
\[
\mathcal{P}\left(z,w\right)=\frac{z_{2}\overline{w}_{2}}{\left(1-z_{2}\overline{w}_{2}\right)^{2}\left(z_{2}\overline{w}_{2}-z_{1}^{2}\overline{w}_{1}^{2}\right)^{2}},
\]
whereas
\[
K\left(z,w\right)=\frac{\left(1+4z_{1}\overline{w}_{1}+z_{1}^{2}\overline{w}_{1}^{2}\right)z_{2}\overline{w}_{2}+z_{1}^{2}\overline{w}_{1}^{2}+z_{2}^{2}\overline{w}_{2}^{2}}{2\pi^{2}\left(1-z_{2}\overline{w}_{2}\right)^{2}\left(z_{2}\overline{w}_{2}-z_{1}^{2}\overline{w}_{1}^{2}\right)^{2}}.
\]
Note also that $\left|K\left(z,w\right)\right|\lesssim\left|\mathcal{P}\left(z,w\right)\right|$
for any $k\in\mathbb{Z}^{+}$. When $k=1$, it is clear that $\left|K\left(z,w\right)\right|\approx\left|\mathcal{P}\left(z,w\right)\right|$.
However, for $k\geq2$, $\left|K\left(z,w\right)\right|\not\approx\left|\mathcal{P}\left(z,w\right)\right|$
since $\left|\mathcal{P}\left(z,w\right)\right|\geq 1$, while $\left|K\left(z,w\right)\right|$
  may vanish inside $\mathbb{H}_{k}\times\mathbb{H}_{k}$, see \cite{EdMcN17}. We will later use $T_{\mu}$ to construct some examples.

We shall prove Theorem \ref{thm:1} and Theorem \ref{thm:2} in much
the same above arguments. We use an idea from \cite{KLT19} to obtain
estimates on sublevel sets $A_{z,M}$.  The main new ingredient now is the analysis at the singular point $\left(0,0\right)$ in the proof of Theorem \ref{thm:2}. Throughout the proofs we use the notation $F\lesssim G$ to indicate that $F\leq c\,G$, for some positive constant $c$; and the notation $F\approx G$ for the fact $c_{1}G\leq F\leq c_{2}G$, for some positive constants $c_{1},c_{2}$.  

\end{remark}

\section{Proof of Theorem \ref{thm:1}} \label{prfA}

\begin{enumerate} [leftmargin=0pt, align=left, labelwidth=!, labelsep=0pt]
\item[$\quad\left(\text{i}\right)\Rightarrow\left(\text{iii}\right)$. ]  Since $\mathcal{P}\left(\cdot,w\right)$ is holomorphic, it suffices to
verify that
\begin{equation}\label{eq:a1}
\intop_{\mathbb{H}_{k}}\frac{\left|\mathcal{P}\left(z,w\right)\right|^{2}}{K\left(w,w\right)}dV\left(z\right)\leq C,\;\forall w\in\mathbb{H}_{k},
\end{equation}
for some positive constant $C$. We have that
\begin{align*}
\intop_{\mathbb{H}_{k}}\left|\mathcal{P}\left(z,w\right)\right|^{2}dV\left(z\right) & =\intop_{\mathbb{H}_{k}}\frac{\left|z_{2}w_{2}\right|^{2}}{\left|1-z_{2}\overline{w}_{2}\right|^{4}\left|z_{2}\overline{w}_{2}-\left(z_{1}\overline{w}_{1}\right)^{k}\right|^{4}}dV\left(z\right)\\
 & =\int_{\left.\mathbb{D}\right\backslash \left\{ 0\right\} }\frac{dV\left(z_{2}\right)}{\left|z_{2}w_{2}\right|^{2}\left|1-z_{2}\overline{w}_{2}\right|^{4}}\int_{\left|z_{1}\right|<\left|z_{2}\right|^{\frac{1}{k}}}\frac{dV\left(z_{1}\right)}{\left|1-\frac{z_{1}^{k}\overline{w}_{1}^{k}}{z_{2}\overline{w}_{2}}\right|^{4}}.
\end{align*}
Here $\mathbb{D}$ denotes the unit disk in $\mathbb{C}$. Consider
the change $\xi:=\left.z_{1}^{k}\right/z_{2}$, we get that
\begin{align}
\intop_{\mathbb{H}_{k}}\left|\mathcal{P}\left(z,w\right)\right|^{2}dV\left(z\right) & =\frac{\left|w_{2}\right|^{-2}}{k}\left(\int_{\left.\mathbb{D}\right\backslash \left\{ 0\right\} }\frac{\left|z_{2}\right|^{\frac{2}{k}-2}}{\left|1-z_{2}\overline{w}_{2}\right|^{4}}dV\left(z_{2}\right)\right)\left(\int_{\mathbb{D}}\frac{\left|\xi\right|^{\frac{2}{k}-2}}{\left|1-\xi\frac{\overline{w}_{1}^{k}}{\overline{w}_{2}}\right|^{4}}dV\left(\xi\right)\right)\nonumber \\
 & \leq\frac{\left|w_{2}\right|^{-2}}{k}J\left(\overline{w}_{2}\right)J\left(\frac{\overline{w}_{1}^{k}}{\overline{w}_{2}}\right),\label{eq:ber}
\end{align}
where
\[
J\left(a\right):=\int_{\mathbb{D}}\frac{\left|\xi\right|^{\frac{2}{k}-2}}{\left|1-\xi a\right|^{4}}dV\left(\xi\right).
\]
$J\left(a\right)$ can be estimated as
\begin{align*}
J\left(a\right) & \leq\int_{\left|\xi\right|>\frac{1}{2}}\frac{\left|\xi\right|^{\frac{2}{k}-2}}{\left|1-\xi a\right|^{4}}dV\left(\xi\right)+\int_{\left|\xi\right|\leq\frac{1}{2}}\frac{\left|\xi\right|^{\frac{2}{k}-2}}{\left|1-\xi a\right|^{4}}dV\left(\xi\right)\\
 & \lesssim\int_{\mathbb{D}}\frac{1}{\left|1-\xi a\right|^{4}}dV\left(\xi\right)+\int_{\left|\xi\right|\leq\frac{1}{2}}\left|\xi\right|^{\frac{2}{k}-2}dV\left(\xi\right)\\
 & \lesssim\left(1-\left|a\right|^{2}\right)^{-2}+1\\
 & \lesssim\left(1-\left|a\right|^{2}\right)^{-2}.
\end{align*}
Here the third inequality follows from Forelli-Rudin estimates,
see e.g. \cite[Theorem 1.3]{LiuC15}. Applying this to \eqref{eq:ber}, we obtain
\[
\intop_{\mathbb{H}_{k}}\left|\mathcal{P}\left(z,w\right)\right|^{2}dV\left(z\right)\lesssim\left|w_{2}\right|^{-2}\left(1-\left|w_{2}\right|^{2}\right)^{-2}\left(1-\frac{\left|w_{1}\right|^{2k}}{\left|w_{2}\right|^{2}}\right)^{-2}\approx K\left(w,w\right).
\]
Thus the estimate \eqref{eq:a1} holds.
\vspace*{0.3cm}
\item[$\quad\left(\text{iii}\right)\Rightarrow\left(\text{i}\right)$. ] We first observe the following elementary fact:

\emph{Fact}. For any $a,b\in\mathbb{D}$ such that 
\[
\left|\frac{a-b}{1-a\overline{b}}\right|<\frac{1}{e}
\]
 then 
\begin{equation}
\left|1-a\overline{b}\right|\approx1-\left|b\right|^{2},\label{eq:sf1}
\end{equation}
and 
\begin{equation}
1-\left|a\right|^{2}\approx1-\left|b\right|^{2}.\label{eq:sf2}
\end{equation}
Note that \eqref{eq:sf2} has been verified in \cite{KLT19}. To see
\eqref{eq:sf1}, set $z=\frac{a-b}{1-a\overline{b}}$ then 
\[
\left|1-a\overline{b}\right|=\frac{1-\left|b\right|^{2}}{\left|1+z\overline{b}\right|}\approx1-\left|b\right|^{2},
\]
since $\left|z\right|<\left.1\right/e$. From \cite[Proposition 6.1.1]{Kli91}, we
see that
\[
G_{\mathbb{D}\times\mathbb{D}}\left(F\left(w_{1},w_{2}\right),F\left(z_{1},z_{2}\right)\right)\leq G_{\mathbb{H}_{k}}\left(\left(w_{1},w_{2}\right),\left(z_{1},z_{2}\right)\right),
\]
for any $z=\left(z_{1},z_{2}\right),w=\left(w_{1},w_{2}\right)\in\mathbb{H}_{k}$,
where $F:\mathbb{H}_{k}\rightarrow\mathbb{D}\times\mathbb{D}$ is the
holomorphic map defined by
\[
F\left(z_{1},z_{2}\right)=\left(\frac{z_{1}^{k}}{z_{2}},z_{2}\right).
\]
Recall that 
\[
G_{\mathbb{D}\times\mathbb{D}}\left(w,z\right)=\max\left\{ \log\left|\frac{z_{1}-w_{1}}{1-w_{1}\overline{z}_{1}}\right|,\log\left|\frac{z_{2}-w_{2}}{1-w_{2}\overline{z}_{2}}\right|\right\} .
\]
It follows that on the set $A_{z}:=\left\{ w\in\mathbb{H}_{k}:G_{\mathbb{H}_{k}}\left(w,z\right)<-1\right\} $,
one has
\begin{equation}\label{eq:ques}
\left|\frac{\frac{z_{1}^{k}}{z_{2}}-\frac{w_{1}^{k}}{w_{2}}}{1-\frac{z_{1}^{k}\overline{w}_{1}^{k}}{z_{2}\overline{w}_{2}}}\right|<\frac{1}{e}\quad\text{and }\quad\left|\frac{z_{2}-w_{2}}{1-z_{2}\overline{w}_{2}}\right|<\frac{1}{e}.
\end{equation}
Therefore for $w\in A_{z}$,
\begin{align*}
\left|\mathcal{P}\left(z,w\right)\right|^{2} & =\frac{1}{\left|z_{2}w_{2}\right|^{2}\left|1-z_{2}\overline{w}_{2}\right|^{4}\left|1-\frac{z_{1}^{k}\overline{w}_{1}^{k}}{z_{2}\overline{w}_{2}}\right|^{4}}\\
 & \approx\frac{1}{\left|z_{2}w_{2}\right|^{2}\left(1-\left|w_{2}\right|^{2}\right)^{4}\left(1-\frac{\left|w_{1}\right|^{2k}}{\left|w_{2}\right|^{2}}\right)^{4}}\\
 & =\frac{\left|w_{2}\right|^{-2}\left(1-\left|w_{2}\right|^{2}\right)^{-2}\left(1-\frac{\left|w_{1}\right|^{2k}}{\left|w_{2}\right|^{2}}\right)^{-2}}{\left|z_{2}\right|^{2}\left(1-\left|w_{2}\right|^{2}\right)^{2}\left(1-\frac{\left|w_{1}\right|^{2k}}{\left|w_{2}\right|^{2}}\right)^{2}}\\
 & \approx\frac{\left|w_{2}\right|^{-2}\left(1-\left|w_{2}\right|^{2}\right)^{-2}\left(1-\frac{\left|w_{1}\right|^{2k}}{\left|w_{2}\right|^{2}}\right)^{-2}}{\left|z_{2}\right|^{2}\left(1-\left|z_{2}\right|^{2}\right)^{2}\left(1-\frac{\left|z_{1}\right|^{2k}}{\left|z_{2}\right|^{2}}\right)^{2}}\\
 & \approx K\left(z,z\right)K\left(w,w\right),
\end{align*}
here we have used the elementary fact \eqref{eq:sf1}-\eqref{eq:sf2}. 

For any $h\in A^{2}\left(\mathbb{H}_{k}\right)$, from the estimate
 \eqref{eq:He-Bo}, we
have
\[
\left|h\left(z\right)\right|^{2}\leq e^{4}\,K\left(z,z\right)\intop_{A_{z}}\left|h\left(w\right)\right|^{2}dV\left(w\right),\;\forall z\in\mathbb{H}_{k}.
\]
Thus
\begin{align*}
\intop_{\mathbb{H}_{k}}\left|h\left(z\right)\right|^{2}\,d\mu\left(z\right) & \lesssim\intop_{\mathbb{H}_{k}}\intop_{\mathbb{H}_{k}}\mathbbm{1}_{A_{z}}\left(w\right)K\left(z,z\right)\left|h\left(w\right)\right|^{2}dV\left(w\right)d\mu\left(z\right)\\
 & \lesssim\intop_{\mathbb{H}_{k}}\intop_{\mathbb{H}_{k}}\frac{\left|\mathcal{P}\left(z,w\right)\right|^{2}}{K\left(w,w\right)}\left|h\left(w\right)\right|^{2}d\mu\left(z\right)dV\left(w\right)\\
 & =\intop_{\mathbb{H}_{k}}T_{\mu}\left(w\right)\left|h\left(w\right)\right|^{2}dV\left(w\right)\\
 & \leq\left\Vert T_{\mu}\right\Vert _{L^{\infty}}\intop_{\mathbb{H}_{k}}\left|h\left(w\right)\right|^{2}dV\left(w\right).
\end{align*}
We conclude that $\mu$ is a Carleson measure on $\mathbb{H}_{k}$. 
\vspace*{0.3cm}
\item[$\quad\left(\text{iii}\right)\Rightarrow\left(\text{ii}\right)$. ] Since $\left|K\left(z,w\right)\right|\lesssim\left|\mathcal{P}\left(z,w\right)\right|$, this is immediate.
\vspace*{0.3cm}
\item[$\quad\left(\text{ii}\right)\Rightarrow\left(\text{i}\right)$. ] Let
\[
R\left(z,w\right):=q_{k}\left(s\right)+p_{k}\left(s\right)t+\frac{s^{k}}{t}p_{k}\left(s\right),
\]
where $s:=z_{1}\overline{w}_{1}$ and $t:=z_{2}\overline{w}_{2}$.
Then $R\left(\cdot,w\right)$ is holomorphic and $R\left(w,w\right)\geq1,\,\forall w\in \mathbb{H}_{k}$.
For any $h\in A^{2}\left(\mathbb{H}_{k}\right)$, we thus have
\begin{align*}
\left|h\left(z\right)\right|^{2} & \leq\left|h\left(z\right)\right|^{2}\left|R\left(z,z\right)\right|^{2}\\
 & \leq e^{4} K\left(z,z\right)\intop_{A_{z}}\left|h\left(w\right)\right|^{2}\left|R\left(w,z\right)\right|^{2}dV\left(w\right).
\end{align*}
So
\begin{align*}
\intop_{\mathbb{H}_{k}}\left|h\left(z\right)\right|^{2}d\mu\left(z\right) & \lesssim\intop_{\mathbb{H}_{k}}\intop_{\mathbb{H}_{k}}\mathbbm{1}_{A_{z}}\left(w\right)K\left(z,z\right)\left|R\left(w,z\right)\right|^{2}\left|h\left(w\right)\right|^{2}dV\left(w\right)d\mu\left(z\right)\\
 & \lesssim\intop_{\mathbb{H}_{k}}\intop_{\mathbb{H}_{k}}\mathbbm{1}_{A_{z}}\left(w\right)\frac{\left|\mathcal{P}\left(z,w\right)\right|^{2}}{K\left(w,w\right)}\left|R\left(w,z\right)\right|^{2}\left|h\left(w\right)\right|^{2}dV\left(w\right)d\mu\left(z\right)\\
 & \lesssim\intop_{\mathbb{H}_{k}}\intop_{\mathbb{H}_{k}}\frac{\left|K\left(z,w\right)\right|^{2}}{K\left(w,w\right)}\left|h\left(w\right)\right|^{2}dV\left(w\right)d\mu\left(z\right)\\
 & \lesssim\left\Vert \mathcal{B}_{\mu}\right\Vert _{L^{\infty}}\intop_{\mathbb{H}_{k}}\left|h\left(w\right)\right|^{2}dV\left(w\right),
\end{align*}
as desired.
\hfill  \hfill $\square$
\end{enumerate}

\section{Proof of Theorem \ref{thm:2}} \label{prfB}

\begin{enumerate} [leftmargin=0pt, align=left, labelwidth=!, labelsep=0pt]
\item[$\quad\left(\text{i}\right)\Rightarrow\left(\text{iii}\right)$. ]  It suffices to show that
\[
\frac{\mathcal{P}\left(\cdot,w\right)\left|w_{2}\right|}{\sqrt{K\left(w,w\right)}}\rightarrow0\;\text{weakly in }A^{2}\left(\mathbb{H}_{k}\right)\;\text{as }w\rightarrow\partial\mathbb{H}_{k}.
\]
Take any $g\in A^{2}\left(\mathbb{H}_{k}\right)$, and choose a sequence
of domains $\left\{ \Omega_{j}\right\} $ such that $\overline{\Omega}_{j}\Subset\Omega_{j+1}\Subset\mathbb{H}_{k}$
and $\cup_{j=1}^{\infty}\Omega_{j}=\mathbb{H}_{k}$. For each $j$,
since
\[
\left|\mathcal{P}\left(z,w\right)\right|=\frac{1}{\left|z_{2}w_{2}\right|\left|1-z_{2}\overline{w}_{2}\right|^{2}\left|1-\frac{z_{1}^{k}\overline{w}_{1}^{k}}{z_{2}\overline{w}_{2}}\right|^{2}},
\]
there exists $c_{j}>0$ such that
\[
\left|\mathcal{P}\left(z,w\right)\right|<\frac{c_{j}}{\left|w_{2}\right|},\;\forall z\in\Omega_{j},\;\forall w\in\mathbb{H}_{k}.
\]
Thus
\begin{align}
\left|\,\intop_{\mathbb{H}_{k}}\frac{\mathcal{P}\left(z,w\right)\left|w_{2}\right|}{\sqrt{K\left(w,w\right)}}\,\overline{g\left(z\right)}\,dV\left(z\right)\right| & \leq\frac{c_{j}\sqrt{\left|\mathbb{H}_{k}\right|}}{\sqrt{K\left(w,w\right)}}\left\Vert g\right\Vert _{L^{2}\left(\mathbb{H}_{k}\right)}+\left\Vert \frac{\mathcal{P}\left(\cdot,w\right)}{\sqrt{K\left(w,w\right)}}\right\Vert _{L^{2}\left(\mathbb{H}_{k}\right)}\hspace*{-0.25cm}\left\Vert g\right\Vert _{L^{2}\left(\left.\mathbb{H}_{k}\right\backslash \Omega_{j}\right)}\nonumber \\
 & \leq\frac{c_{j}\sqrt{\left|\mathbb{H}_{k}\right|}}{\sqrt{K\left(w,w\right)}}\left\Vert g\right\Vert _{L^{2}\left(\mathbb{H}_{k}\right)}+C\left\Vert g\right\Vert _{L^{2}\left(\left.\mathbb{H}_{k}\right\backslash \Omega_{j}\right)}.\label{eq:co1}
\end{align}
Note that the right hand side of \eqref{eq:co1} can be made arbitrarily
small as $w\rightarrow\partial\mathbb{H}_{k}$ and $j\rightarrow\infty$ 
because $g\in L^{2}\left(\mathbb{H}_{k}\right)$ and 
\[
\lim_{w\rightarrow\partial\mathbb{H}_{k}}\frac{1}{\sqrt{K\left(w,w\right)}}=0.
\]
It follows that
\[
\intop_{\mathbb{H}_{k}}\frac{\mathcal{P}\left(z,w\right)\left|w_{2}\right|}{\sqrt{K\left(w,w\right)}}\,\overline{g\left(z\right)}\,dV\left(z\right)\rightarrow0\;\text{as }w\rightarrow\partial\mathbb{H}_{k},
\]
as desired.\vspace*{0.3cm}
\item[$\quad\left(\text{i}\right)\Rightarrow\left(\text{ii}\right)$. ] 
Since $\left|K\left(z,w\right)\right|\lesssim\left|\mathcal{P}\left(z,w\right)\right|$, this is straightforward from the argument in $\quad\left(\text{i}\right)\Rightarrow\left(\text{iii}\right)$.
\vspace*{0.3cm}
\item[$\quad\left(\text{ii}\right)\Rightarrow\left(\text{i}\right)$. ] 
Following the proof of Theorem \ref{thm:1}, we first establish the following estimate:
\begin{lemma}\label{lem:1}
For any $\varepsilon>0$, there exists $\delta_{\varepsilon}>0$ such
that
\[
\intop_{W_{\delta_{\varepsilon}}}\left|h\left(z\right)\right|^{2}dV\left(z\right)<\varepsilon\intop_{\mathbb{H}_{k}}\left|h\left(z\right)\right|^{2}dV\left(z\right),\;\forall h\in A^{2}\left(\mathbb{H}_{k}\right),
\]
where $W_{\delta_{\varepsilon}}:=\left\{ z\in\mathbb{H}_{k}:\left|z_{2}\right|<\delta_{\varepsilon}\right\} $.

\end{lemma}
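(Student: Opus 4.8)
The plan is to exploit the fact that $\mathbb{H}_k$ is a Reinhardt domain, so that $A^2(\mathbb{H}_k)$ carries an orthogonal basis of monomials, and then to show that the $L^2$-mass of each basis element captured by the shrinking region $W_{\delta}$ near the singular point $(0,0)$ decays at a rate that is uniform over the basis. First I would recall the classical fact (see, e.g., \cite{Edh16}) that the monomials $z_1^m z_2^n$ that lie in $A^2(\mathbb{H}_k)$ — index them by a set $S$; necessarily $m\ge 0$ — form a complete orthogonal system, so every $h\in A^2(\mathbb{H}_k)$ has an expansion $h=\sum_{(m,n)\in S}a_{mn}z_1^m z_2^n$ converging in $L^2(\mathbb{H}_k)$ with $\|h\|_{L^2(\mathbb{H}_k)}^2=\sum_{(m,n)\in S}|a_{mn}|^2\,\|z_1^m z_2^n\|_{L^2(\mathbb{H}_k)}^2$. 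Since $W_{\delta}=\{|z_1|^k<|z_2|<\delta\}$ is itself a bounded Reinhardt domain contained in $\mathbb{H}_k$, the same monomials remain pairwise orthogonal in $L^2(W_{\delta})$; applying the $L^2(\mathbb{H}_k)$-expansion, whose partial sums also converge in $L^2(W_{\delta})$, yields the companion identity $\|h\|_{L^2(W_{\delta})}^2=\sum_{(m,n)\in S}|a_{mn}|^2\,\|z_1^m z_2^n\|_{L^2(W_{\delta})}^2$.

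Next I would compare the two monomial norms. The cleanest route is to observe that $W_{\delta}$ is a rescaled copy of $\mathbb{H}_k$: for $\delta\in(0,1)$ the linear biholomorphism $\phi_{\delta}(w_1,w_2)=(\delta^{1/k}w_1,\delta w_2)$ maps $\mathbb{H}_k$ onto $W_{\delta}$ with real Jacobian $\delta^{2/k+2}$. Changing variables $z=\phi_{\delta}(w)$ in $\int_{W_{\delta}}|z_1^m z_2^n|^2\,dV$ (or a one-line computation in polar coordinates) gives
\[
\|z_1^m z_2^n\|_{L^2(W_{\delta})}^2=\delta^{\alpha_{mn}}\,\|z_1^m z_2^n\|_{L^2(\mathbb{H}_k)}^2,\qquad \alpha_{mn}:=\frac{2(m+1)}{k}+2(n+1),
\]
where, for $m\ge 0$, the condition $(m,n)\in S$ is exactly $\alpha_{mn}>0$ (this is precisely $L^2$-integrability near the origin). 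Feeding this into the two Parseval identities, for every $h$ and every $\delta\in(0,1)$,
\[
\intop_{W_{\delta}}|h|^2\,dV=\sum_{(m,n)\in S}|a_{mn}|^2\,\delta^{\alpha_{mn}}\,\|z_1^m z_2^n\|_{L^2(\mathbb{H}_k)}^2,
\]
so the lemma reduces to showing that $\delta^{\alpha_{mn}}$ is small uniformly in $(m,n)\in S$, i.e. that $\inf_{(m,n)\in S}\alpha_{mn}>0$.

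The heart of the argument — and the only genuinely new ingredient, the ``analysis at the singular point $(0,0)$'' — is the uniform lower bound $\alpha_{mn}\ge 2/k$ for all $(m,n)\in S$. While $\alpha_{mn}$ can be arbitrarily large, a priori it could tend to $0$, and one must rule this out. Writing $m+1=qk+r$ with $q\in\mathbb{Z}_{\ge 0}$ and $r\in\{1,\dots,k\}$, one obtains $\alpha_{mn}=\tfrac{2r}{k}+2(q+n+1)$; since $\alpha_{mn}>0$ while $\tfrac{2r}{k}\le 2$, the integer $q+n+1$ must be nonnegative, whence $\alpha_{mn}\ge\tfrac{2r}{k}\ge\tfrac{2}{k}$. (The bound is sharp, attained e.g. by $h=1/z_2\in A^2(\mathbb{H}_k)$.) Combining everything, $\intop_{W_{\delta}}|h|^2\,dV\le \delta^{2/k}\intop_{\mathbb{H}_k}|h|^2\,dV$ for $\delta\in(0,1)$, and it suffices to pick $\delta_{\varepsilon}\in(0,1)$ with $\delta_{\varepsilon}^{2/k}<\varepsilon$. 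I expect the reduction to the monomial ratio (the Reinhardt decomposition and the scaling computation) to be routine bookkeeping; the main obstacle is recognizing and proving the arithmetic lower bound $\alpha_{mn}\ge 2/k$, which is where the geometry at the singular point enters.
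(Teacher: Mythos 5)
Your proof is correct, and it takes a genuinely different route from the paper's. The paper reuses the machinery already set up for Theorem 3.4: the B{\l}ocki--Herbort estimate \eqref{eq:He-Bo} on the sublevel sets $A_{z}$ gives $\intop_{W_{\delta}}\left|h\right|^{2}dV\lesssim\intop_{\mathbb{H}_{k}}\bigl(\intop_{W_{\delta}}\left|\mathcal{P}\left(z,w\right)\right|^{2}dV\left(z\right)/K\left(w,w\right)\bigr)\left|h\left(w\right)\right|^{2}dV\left(w\right)$, and the inner integral is then bounded by $C\,\delta^{2/k}K\left(w,w\right)$ via the change of variables $\xi=z_{1}^{k}/z_{2}$ and Forelli--Rudin estimates, leading to the same choice $\delta_{\varepsilon}<\min\{\varepsilon^{k/2},1/2\}$. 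You instead exploit the Reinhardt structure directly: the monomial orthogonal decomposition of $A^{2}\left(\mathbb{H}_{k}\right)$, the observation that $W_{\delta}$ is the image of $\mathbb{H}_{k}$ under the anisotropic dilation $(w_{1},w_{2})\mapsto(\delta^{1/k}w_{1},\delta w_{2})$, and the arithmetic lower bound $\alpha_{mn}\geq 2/k$ over the admissible exponents (your reduction modulo $k$ argument is right, and the positivity of $\alpha_{mn}$ is indeed exactly the $L^{2}$-integrability condition near the origin). Your approach is self-contained and elementary --- it needs no Green-function or Bergman-kernel input at all --- and it yields the clean inequality $\intop_{W_{\delta}}\left|h\right|^{2}dV\leq\delta^{2/k}\intop_{\mathbb{H}_{k}}\left|h\right|^{2}dV$ with constant $1$, exhibiting the sharp exponent $2/k$ (attained by $1/z_{2}$); the trade-off is that it is tied to the Reinhardt symmetry of $\mathbb{H}_{k}$, whereas the paper's kernel-based argument is the one that generalizes to the weighted estimates used elsewhere in the proof of Theorem 1.5. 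Both arguments are complete; minor bookkeeping points in yours (orthogonality of the monomials on the Reinhardt subdomain $W_{\delta}$ and passage to the limit in the Parseval identity there) are standard and correctly handled.
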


\begin{proof}[Proof of Lemma \ref{lem:1}]\;
Repeating the argument used in the proof of Theorem \ref{thm:1}, we obtain that
\begin{align*}
\intop_{W_{\delta_{\varepsilon}}}\left|h\left(z\right)\right|^{2}dV\left(z\right) & \lesssim\intop_{\mathbb{H}_{k}}\intop_{\mathbb{H}_{k}}\mathbbm{1}_{W_{\delta_{\varepsilon}}}\left(z\right)\mathbbm{1}_{A_{z}}\left(w\right) K\left(z,z\right)\left|h\left(w\right)\right|^{2}dV\left(w\right)dV\left(z\right)\\
 & \lesssim\intop_{\mathbb{H}_{k}}\left(\;\intop_{\mathbb{H}_{k}}\mathbbm{1}_{W_{\delta_{\varepsilon}}}\left(z\right)\frac{\left|\mathcal{P}\left(z,w\right)\right|^{2}}{K\left(w,w\right)}\,dV\left(z\right)\right)\left|h\left(w\right)\right|^{2}dV\left(w\right).
\end{align*}
It remains to verify the existence of $\delta_{\varepsilon}$ such
that
\[
\intop_{\mathbb{H}_{k}}\mathbbm{1}_{W_{\delta_{\varepsilon}}}\left(z\right)\frac{\left|\mathcal{P}\left(z,w\right)\right|^{2}}{K\left(w,w\right)}\,dV\left(z\right)\lesssim\varepsilon,\;\forall w\in\mathbb{H}_{k}.
\]
We have
\begin{align*}
\intop_{W_{\delta_{\varepsilon}}}\left|\mathcal{P}\left(z,w\right)\right|^{2}\,dV\left(z\right) & \leq\frac{\left|w_{2}\right|^{-2}}{k}\left(2^{4}\intop_{\left|z_{2}\right|<\delta_{\varepsilon}}\left|z_{2}\right|^{\frac{2}{k}-2}dV\left(z_{2}\right)\right)\left(\intop_{\mathbb{D}}\frac{\left|\xi\right|^{\frac{2}{k}-2}}{\left|1-\xi\frac{\overline{w}_{1}^{k}}{\overline{w}_{2}}\right|^{4}}dV\left(\xi\right)\right)\\
 & \lesssim\delta_{\varepsilon}^{\frac{2}{k}}\left|w_{2}\right|^{-2}\left(1-\left|w_{2}\right|^{2}\right)^{-2}\left(1-\frac{\left|w_{1}\right|^{2k}}{\left|w_{2}\right|^{2}}\right)^{-2}\\
 & \lesssim\varepsilon\,K\left(w,w\right),
\end{align*}
provided that $\delta_{\varepsilon}<\min\left\{ \varepsilon^{k/2},\left.1\right/2\right\} $, as
desired.

\end{proof}

We are ready to verify the implication $\left(\text{ii}\right)\Rightarrow\left(\text{i}\right)$. Choose a sequence of domains $\left\{ \Omega_{j}\right\} $ such that
$\overline{\Omega}_{j}\Subset\Omega_{j+1}\Subset\mathbb{H}_{k}$ and
$\cup_{j=1}^{\infty}\Omega_{j}=\mathbb{H}_{k}$. For each $j$, let
\[
R_{j}\left(w\right):=\intop_{\mathbb{H}_{k}}\frac{\left|K\left(z,w\right)\right|^{2}}{K\left(w,w\right)}\left(1-\mathbbm{1}_{\Omega_{j}}\left(z\right)\right)\,d\mu\left(z\right),\;w\in\mathbb{H}_{k}.
\]
Note that  $R_{j}$ and $\mathcal{B}_{\mu}$ are continuous on $\mathbb{H}_{k}$. Consider the operators 
\begin{align*}
i_{\Omega_{j}}:A^{2}\left(\mathbb{H}_{k}\right) & \rightarrow L^{2}\left(\mathbb{H}_{k},\mu\right)\\
h & \rightarrow\mathbbm{1}_{\Omega_{j}}h.
\end{align*}
By Montel's theorem, $i_{\Omega_{j}}$ are compact operators. It remains to show that  \begin{equation}
\left\Vert i_{\Omega_{j}}-i_{\mathbb{H}_{k}}\right\Vert \xrightarrow{j\rightarrow\infty}0.\label{eq:nomcon}
\end{equation}
Fix $\varepsilon>0$, and choose $\delta_{\varepsilon}>0$ as in Lemma \ref{lem:1}. Then for any $h\in A^{2}\left(\mathbb{H}_{k}\right)$ such that $\left\Vert h\right\Vert _{L^{2}\left(\mathbb{H}_{k}\right)}=1$, 
we have 
\[
\intop_{W_{\delta_{\varepsilon}}}\left|h\left(w\right)\right|^{2}dV\left(w\right)<\varepsilon.
\]
The key point we need here is that $\delta_{\varepsilon}$ is independent of $h$. Note that $R_{j}\leq \mathcal{B}_{\mu}$ on $\mathbb{H}_{k}$. And, on $\left.\mathbb{H}_{k}\right\backslash W_{\delta_{\varepsilon}}$,
we have $\left|z_{2}\right|\geq\delta_{\varepsilon}$.
The hypothesis implies that $R_{j}$ can be extended continuously to the compact set $\left.\overline{\mathbb{H}}_{k}\right\backslash W_{\delta_{\varepsilon}}$ (with zero value on the part $\left.\partial\mathbb{H}_{k}\right\backslash W_{\delta_{\varepsilon}}$). Moreover, $R_{j}\searrow0$ on $\left.\overline{\mathbb{H}}_{k}\right\backslash W_{\delta_{\varepsilon}}$
by the dominated convergence theorem. From Dini's
theorem, there exists $j_{0}>0$ such that $\left|R_{j}\left(w\right)\right|<\varepsilon$,
for any $w\in\left.\overline{\mathbb{H}}_{k}\right\backslash W_{\delta_{\varepsilon}}$
and $j>j_{0}$. Following the last lines in the proof of Theorem \eqref{thm:1},
we see that
\begin{align*}
\left\Vert \left(1-\mathbbm{1}_{\Omega_{j}}\right)h\right\Vert _{L^{2}\left(\mathbb{H}_{k},\mu\right)}^{2} & \leq\intop_{\mathbb{H}_{k}}\intop_{\mathbb{H}_{k}}\frac{\left|K\left(z,w\right)\right|^{2}}{K\left(w,w\right)}\left(1-\mathbbm{1}_{\Omega_{j}}\left(z\right)\right)\left|h\left(w\right)\right|^{2}d\mu\left(z\right)dV\left(w\right)\\
 & \leq\intop_{W_{\delta_{\varepsilon}}}\mathcal{B}_{\mu}\left(w\right)\left|h\left(w\right)\right|^{2}dV\left(w\right)+\intop_{\left.\mathbb{H}_{k}\right\backslash W_{\delta_{\varepsilon}}}R_{j}\left(w\right)\left|h\left(w\right)\right|^{2}dV\left(w\right)\\
 & \leq\left\Vert \mathcal{B}_{\mu}\right\Vert _{L^{\infty}\left(\mathbb{H}_{k}\right)}\,\varepsilon+\varepsilon,
\end{align*}
for $j>j_{0}$. This demonstrates the claim \eqref{eq:nomcon}.
\vspace*{0.3cm}

\item[$\quad\left(\text{iii}\right)\Rightarrow\left(\text{ii}\right)$. ] Since  $\mathcal{B}_{\mu}\leq T_{\mu}$, this is immediate.
\hfill  \hfill $\square$

\vspace*{0.3cm}
\end{enumerate}

\begin{remark}\label{remar} \normalfont
With the characterisations in Theorem \ref{thm:1} and Theorem \ref{thm:2}, we now discuss some specific examples. Consider a special measure $\mu$ on $\mathbb{H}_{k}$ in the form
\[
d\mu\left(z\right)=f\left(\frac{z_{1}^{k}}{z_{2}}\right)g\left(z_{2}\right)dV\left(z\right),
\]
where $f$ and $g$ are some non-negative, measurable functions on $\mathbb{D}$. Then
we get that
\[
T_{\mu}\left(w\right)\approx\frac{1}{K\left(w,w\right)}\frac{1}{\left|w_{2}\right|^{2}}\left(\intop_{\mathbb{D}}\frac{g\left(z_{2}\right)\left|z_{2}\right|^{\frac{2}{k}-2}}{\left|1-z_{2}\overline{w}_{2}\right|^{4}}dV\left(z_{2}\right)\right)\left(\intop_{\mathbb{D}}\frac{f\left(\xi\right)\left|\xi\right|^{\frac{2}{k}-2}}{\left|1-\xi\frac{\overline{w}_{1}^{k}}{\overline{w}_{2}}\right|^{4}}dV\left(\xi\right)\right).
\]
Note that 
\[
\frac{1}{K\left(w,w\right)\left|w_{2}\right|^{2}}\approx\left(1-\left|w_{2}\right|^{2}\right)^{2}\left(1-\frac{\left|w_{1}\right|^{2k}}{\left|w_{2}\right|^{2}}\right)^{2}.
\]
It follows that $\mu$ is a (non-zero) Carleson measure on $\mathbb{H}_{k}$
if and only if both measures
\[
d\mu_{1}\left(\omega\right):=f\left(\omega\right)\left|\omega\right|^{\frac{2}{k}-2}dV\left(\omega\right)\quad\text{and }\;\;d\mu_{2}\left(\omega\right):=g\left(\omega\right)\left|\omega\right|^{\frac{2}{k}-2}dV\left(\omega\right)
\]
are Carleson measures on $\mathbb{D}$. 

If we set
\[
f\left(\xi\right):=\left(1-\left|\xi\right|^{2}\right)^{3}\left|\xi\right|^{2-\frac{2}{k}}\quad\text{and}\quad g\left(z_{2}\right):=\left(1-\left|z_{2}\right|^{2}\right)^{3}\left|z_{2}\right|^{2-\frac{2}{k}}
\]
then Forelli-Rudin estimates (\cite{LiuC15}) give that 
\[
T_{\mu}\left(w\right)\approx\frac{1}{K\left(w,w\right)}\frac{1}{\left|w_{2}\right|^{2}}.
\]
Thus $T_{\mu}\in L^{\infty}\left(\mathbb{H}_{k}\right)$ and $\left|w_{2}\right|^{2}T_{\mu}\left(w\right)\rightarrow0$
as $w\rightarrow\partial\mathbb{H}_{k}$. So the measure
\[
d\mu\left(z\right)=\left|z_{1}\right|^{2k-2}\left(1-\left|z_{2}\right|^{2}\right)^{3}\left(1-\frac{\left|z_{1}\right|^{2k}}{\left|z_{2}\right|^{2}}\right)^{3}dV\left(z_{1},z_{2}\right)
\]
is a vanishing Carleson measure on $\mathbb{H}_{k}$.

If we consider
\[
f\left(\xi\right):=\left|\xi\right|^{2-\frac{2}{k}}\quad\text{and}\quad g\left(z_{2}\right):=\left|z_{2}\right|^{2-\frac{2}{k}}
\]
then
\begin{align*}
T_{\mu}\left(w\right) & \approx\frac{1}{K\left(w,w\right)}\frac{1}{\left|w_{2}\right|^{2}}\left(1-\left|w_{2}\right|^{2}\right)^{-2}\left(1-\frac{\left|w_{1}\right|^{2k}}{\left|w_{2}\right|^{2}}\right)^{-2}\\
 & \approx1.
\end{align*}
On the other hand, consider the point $M\left(\left.1\right/2,1\right)\in\partial\mathbb{H}_{k}$.
For any $\delta>0$, since $\left|w_{2}\right|^{\delta}T_{\mu}\left(w\right)\approx\left|w_{2}\right|^{\delta}$,
it follows that $\left|w_{2}\right|^{\delta}T_{\mu}\left(w\right)\not\rightarrow0$
as $w\rightarrow M$.  So
\[
d\mu\left(z\right)=\left|z_{1}\right|^{2k-2}dV\left(z_{1},z_{2}\right)
\]
 is a Carleson measure but not a vanishing Carleson measure on $\mathbb{H}_{k}$.

\end{remark}

\section{Concluding remarks}

For the classical Hartogs triangle $\mathbb{H}$, since $\left|K\left(z,w\right)\right|\approx\left|\mathcal{P}\left(z,w\right)\right|$,
it can be seen from the proof of Theorem \ref{thm:1} that  the Skwarczy{\'n}ski distance
$d_{S}\left(z,w\right)$ is bounded by some constant $c<1$ for any
$w\in A_{z,1}$. However, it is not clear to us whether this is true
for $\mathbb{H}_{k}$, with $k\geq2$. Note that if we rely only on
the weaker estimates \eqref{eq:ques} then this is not the case. For example,
consider $k=2$ and $z_{j}=\left(-\left(1-\frac{1}{j}\right),1-\frac{1}{j}\right),$
$w_{j}=\left(1-\frac{1}{j},1-\frac{1}{j}\right)$, for $j\geq1$, 
then the estimates \eqref{eq:ques} hold, however $d_{S}\left(z_{j},w_{j}\right)\rightarrow1$
as $j\rightarrow\infty$.

We do not know an example of a bounded pseudoconvex domain such that
the condition $\mathcal{B}_{\mu}\in L^{\infty}$ is not sufficient
for $\mu$ being a Carleson measure. Here we make the trivial observation that for any bounded smooth domain (not necessarily pseudoconvex) $\Omega$
in $\mathbb{C}^{n}$, the condition $\mathcal{B}_{\mu}\in L^{\infty}\left(\Omega\right)$
implies certain $L^{2}$ regularity. Indeed, for any 
Borel measure $\mu$,
the inclusion $W^{s}\left(\Omega\right)\cap\mathcal{O}\left(\Omega\right)\hookrightarrow L^{2}\left(\Omega,\mu\right)$
is bounded if $s\geq\left.\left(3n+1\right)\right/2$. Here $W^{s}\left(\Omega\right)$ is the standard $L^{2}$ Sobolev space of order $s$ on $\Omega$.
To check this, from \cite[Lemma 2]{Bel81-1} and the remark after it, there exists a bounded linear operator $\Phi^{s}:W^{s}\left(\Omega\right)\cap\mathcal{O}\left(\Omega\right)\rightarrow W_{0}^{s}\left(\Omega\right)$ such that $P\Phi^{s}=I$, where $P$ denotes the Bergman projection of $\Omega$. For $h\in W^{s}\left(\Omega\right)\cap\mathcal{O}\left(\Omega\right)$, it thus follows that 
\begin{align*}
\intop_{\Omega}\left|h\left(z\right)\right|^{2}d\mu\left(z\right) & =\intop_{\Omega}\left|\intop_{\Omega}K\left(z,w\right)\Phi^{s}\left(h\right)\left(w\right)dV\left(w\right)\right|^{2}d\mu\left(z\right)\\
 & \leq\left(\intop_{\Omega}\left(\,\intop_{\Omega}\left|K\left(z,w\right)\right|^{2}\left|\Phi^{s}\left(h\right)\left(w\right)\right|^{2}d\mu\left(z\right)\right)^{\frac{1}{2}}dV\left(w\right)\right)^{2}\\
 & \leq\left\Vert \mathcal{B}_{\mu}\right\Vert _{L^{\infty}}\left(\,\intop_{\Omega}K^{\frac{1}{2}}\left(w,w\right)\left|\Phi^{s}\left(h\right)\left(w\right)\right|dV\left(w\right)\right)^{2}.
\end{align*}
Since $\Phi^{s}\left(h\right)\in W_{0}^{s}\left(\Omega\right)$, by Sobolev embedding theorem we have  
\[ \left|\Phi^{s}\left(h\right)\left(w\right)\right|\leq C_{\Omega}\left\Vert \Phi^{s}\left(h\right)\right\Vert _{W^{\left.\left(3n+1\right)\right/2}\left(\Omega\right)}\delta_{\Omega}^{\frac{n+1}{2}}\left(w\right),\;\forall w\in\Omega. \] 
Here $\delta_{\Omega}$ denotes the distance function to the boundary. On the other hand, 
\[ K^{\frac{1}{2}}\left(w,w\right)\leq C_{\Omega}\,\delta_{\Omega}^{-\frac{n+1}{2}}\left(w\right),\;\forall w\in\Omega. \] 
We arrive at \[ \intop_{\Omega}\left|h\left(z\right)\right|^{2}d\mu\left(z\right)\leq const.\left\Vert \Phi^{s}\left(h\right)\right\Vert _{W^{\left.\left(3n+1\right)\right/2}\left(\Omega\right)}^{2}\leq const.\left\Vert h\right\Vert _{W^{s}\left(\Omega\right)}^{2} \] as desired.


\textbf{Acknowledgement.} This work is supported by Ho Chi Minh City University of Technology.


\end{document}